%% file: better-picture-hanging.tex
\documentclass[10pt,a4paper]{article}

\input{preamble.tex}

\usepackage{geometry}
\usepackage{amsthm}
\usepackage{graphicx}
\usepackage{hyperref}
\usepackage{cite}
\usepackage{xcolor}

\newtheorem{theorem}{Theorem}
\newtheorem{proposition}[theorem]{Proposition}
\newtheorem{lemma}[theorem]{Lemma}

\theoremstyle{definition}
\newtheorem{remark}[theorem]{Remark}

\title{The \kn{k}{n} picture-hanging puzzle:\\
  shorter solutions for small~$k$ and~$n-k$}

\author{Tom Verhoeff\thanks{Department of Mathematics \& Computer Science,
  Eindhoven University of Technology; \texttt{T.Verhoeff@tue.nl}}}

\date{\today}

\begin{document}

\maketitle

\begin{abstract}
The picture-hanging puzzle, popularized by Demaine et al.\ (2014),
asks for a way to wrap a wire around $n$ nails
such that the picture hangs as long as fewer than $k$ nails are removed,
but falls as soon as any $k$ are removed.
Solutions correspond to words in the free group $F_n$.
We give explicit, deterministic, polynomial-length constructions for two regimes:
\kn{2}{n} with word length at most $\tfrac{8}{3}n^{\log_2 6} - 4n^2$,
and \kn{(n-2)}{n} with word length $6n\log_2(n/2)$, both for $n$ a power of two.
These improve on Wästlund's quasi-polynomial deterministic construction
in their respective regimes.
We also report, via exhaustive computer search,
the exact minimum length of $16$ for the \kn{2}{4} puzzle,
attained by two structurally distinct solutions.
As an additional contribution,
we observe that the natural workshop realization with carabiners on a flat board
introduces an over/under ambiguity at every wire crossing;
a wrong choice can produce a Whitehead link,
which is topologically distinct from the intended commutator.
\end{abstract}

\section{Introduction}
\label{sec:intro}

The picture-hanging puzzle in its simplest form asks:
can one wrap a wire around two nails such that the picture falls
if either nail is removed?
The classical answer, popularized by Spivak~\cite{Spivak1997} in 1997 and
analyzed in detail by Demaine et al.~\cite{Demaine2014}, is yes:
the commutator $xyx^{-1}y^{-1}$ in the free group on two generators
corresponds to such a wiring.
In this paper we will write this commutator in additive notation
as $1 + 2 - 1 - 2$ (the rationale is explained in Section~\ref{sec:notation}).
Demaine et al.\ generalized this to arbitrary monotone Boolean functions,
with particular attention to the \emph{\kn{k}{n}} family:
the picture falls when at least $k$ of the $n$ nails are removed,
and hangs otherwise.
Their construction for \kn{k}{n} uses AKS sorting networks and
gives a word of polynomial length $O(n^{1\,561\,600})$ in the free group
--- polynomial, but with an astronomical exponent.

Wästlund~\cite{Waestlund2021} subsequently gave a simplified proof and
a much better polynomial bound of $O(n^{7.004})$ via a probabilistic construction,
together with an explicit deterministic divide-and-conquer construction
of quasi-polynomial length $n^{O(\log n)}$.
The explicit deterministic polynomial regime remained unexplored
for specific small values of $k$ or $n-k$.

We address this gap.
For \emph{fixed small} $k$ and \emph{fixed small} $n-k$
--- two natural regimes where the puzzle remains interesting ---
we give explicit deterministic constructions with low-degree polynomial length,
established by recurrences with closed-form solutions.
Combined with exhaustive computer search,
we also pin down exact minimum lengths for the smallest open cases.

\subsection*{Origin}

The present note arose from preparations for two public-outreach events:
a workshop for elementary and secondary school students
during a preparation day (March 2026),
organized by the Stichting Vierkant voor Wiskunde
for their summer mathematics camps,
and a mini-lecture at the MathFest in Utrecht (May 2026).
For practical reasons we used a wooden board with carabiners rather than nails:
a wire cannot accidentally slip off a carabiner the way it can off a smooth nail,
so participants could handle a partially completed construction
without the picture falling prematurely.
This pedagogical convenience comes at a cost: with carabiners on a flat board,
the wire's over/under choice at each crossing is no longer forced by gravity,
and incorrect choices produce topologically different links.
Section~\ref{sec:carabiners} discusses this.
For interactive experimentation, the author developed a Wolfram Demonstration~\cite{Verhoeff2026}.

While preparing materials,
the author began investigating the lengths of explicit \kn{k}{n} solutions.
The puzzle most accessible to participants is \kn{2}{n}
(``the picture is safe against the failure of any single nail'').
Iterative improvements
--- from the length-$80$ solution listed by Demaine et al.~for \kn{2}{4},
through successive reductions to lengths $68$, $58$, $54$, $24$, $20$, $18$ ---
led ultimately to the systematic constructions and the exact minimum reported here.

\subsection*{Contributions}

Our results are:

\begin{enumerate}
\item \emph{\kn{2}{n} (Theorem~\ref{thm:2-of-n}).}
  A general Demaine-perspective binary-splitting construction with Huffman commutator trees,
  specialized to $k = 2$, yields for $n = 2^i$, $i \ge 2$,
  a solution of length at most $\tfrac{8}{3} \cdot 6^i - 4 \cdot 4^i = \tfrac{8}{3}\,n^{\log_2 6} - 4n^2$.
  The exponent $\log_2 6 \approx 2.585$ is sub-cubic.
\item \emph{\kn{(n-2)}{n} (Theorem~\ref{thm:n-2-of-n}).}
  Wästlund's binary-split recursion,
  specialized to this regime and
  using optimal building blocks for the $k=1,2$ cases (Wästlund convention),
  yields, for $n = 2^i$, $i \ge 1$,
  a solution of length exactly $6(i-1) \cdot 2^i = 6n\log_2(n/2)$.
\item \emph{Exact minimum for \kn{2}{4} (Section~\ref{sec:search}).}
  Exhaustive symmetry-quotiented computer search establishes
  that the minimum length is $16$,
  attained by exactly two structurally distinct solutions.
\item \emph{Workshop realizations (Section~\ref{sec:carabiners}).}
  A discussion of the over/under ambiguity in carabiner-on-board realizations,
  with the simplest failure mode being a Whitehead link arising from a mis-wired \kn{1}{2} solution.
\end{enumerate}

\subsection*{Outline}

Section~\ref{sec:notation} fixes notation, sets out the algebra of expressions,
and treats the three easy cases.
Section~\ref{sec:binary-split} presents Demaine-perspective binary splitting
with Huffman commutator trees --- the general construction for \kn{k}{n},
then a closed-form length for the case \kn{2}{n}.
Section~\ref{sec:n-2-of-n} treats the \kn{(n-2)}{n} regime,
based on Wästlund-perspective binary splitting.
Section~\ref{sec:extension} presents an exponential one-step extension construction
that produces short explicit solutions for small $n$ and motivated the exhaustive search.
Section~\ref{sec:search} reports the exhaustive-search results,
including the proven minimum length $16$ for \kn{2}{4}.
Section~\ref{sec:carabiners} discusses the carabiner subtlety and
the Whitehead-link failure mode.
Section~\ref{sec:open} lists open questions.

\section{Notation and preliminaries}
\label{sec:notation}

\subsection*{An additive algebra of variables}

We work with formal expressions built from natural-number variables,
written $1, 2, 3, \ldots$ rather than $x_1, x_2, x_3, \ldots$.
The expressions use a binary operator $+$ and a unary inverse $-$.
Subject to the rules below, this is the free group on the variables;
we use the additive notation throughout
because we find it more accessible to a broader audience,
including secondary-school students and even upper-elementary pupils.

The expressions are governed by the following rules,
which the reader should keep in mind,
especially the contrast with ordinary integer arithmetic:
\begin{itemize}
\item \emph{Associativity:} $(x + y) + z \;=\; x + (y + z)$.
\item \emph{Non-commutativity:} in general $x + y \;\ne\; y + x$.
\item \emph{Neutral element:} we write $0$ for the empty expression,
  and $0 + x = x + 0 = x$.
\item \emph{Inverse:} $x + (-x) \;=\; 0 \;=\; (-x) + x$.
\item \emph{Double negation:} $-(-x) = x$.
\item \emph{Inversion of a sum:} $-(x + y) = -y + (-x) = -y - x$
  (note the order reversal).
\end{itemize}
We allow ourselves the shorthand $x + x = 2x$ and similar,
but the reader must keep in mind that, due to non-commutativity,
$p(x + y) \ne px + py$ in general.

The \emph{length} of an expression is the number of variables-with-sign
in its fully reduced form (where reduction means repeatedly applying $x + (-x) = 0$
and absorbing the resulting $0$).
A nonzero expression has positive length; the only zero expression has length $0$.
The \emph{support} of an expression is the set of variables that occur in it;
two expressions have \emph{disjoint supports} when they share no variable.

\subsection*{The picture-hanging puzzle}

A solution $w$ to the \emph{\kn{k}{n} picture-hanging puzzle} (Demaine's convention)
is an expression in variables $1, \ldots, n$ such that:
\begin{itemize}
\item[(a)] $w$ is nonzero;
\item[(b)] setting any $k$ of the variables $1, \ldots, n$ to $0$ reduces $w$ to $0$;
\item[(c)] for every set of fewer than $k$ variables,
  setting those variables to $0$ leaves $w$ nonzero.
\end{itemize}
Condition (b) corresponds to the picture falling when any $k$ nails are removed;
condition (c) corresponds to the picture still hanging when fewer than $k$ are removed.
We write $\Hang{k}{n}$ for any solution to the \kn{k}{n} puzzle,
and $\Len{k}(n)$ for the length of a particular solution under discussion.

\subsection*{Specifications}

It helps to name the object a puzzle asks us to realize.
A \emph{specification} on a set of nails $V$ is a monotone Boolean function
\[
  f : 2^V \to \{\mathsf{hang}, \mathsf{fall}\}, \qquad f(V) = \mathsf{fall},
\]
where $f(S)$ is the outcome when exactly the nails in $S$ are removed
(Demaine's convention).
We order $\mathsf{hang} < \mathsf{fall}$ and read $\vee$ and $\wedge$
as $\max$ and $\min$,
so that $\mathsf{fall} \vee x = \mathsf{fall}$
and $\mathsf{hang} \wedge x = \mathsf{hang}$;
\emph{monotone} then means $S \subseteq S' \Rightarrow f(S) \le f(S')$,
that is, removing more nails never restores the picture.
We say $f$ \emph{falls} at $S$ when $f(S) = \mathsf{fall}$,
and \emph{hangs} at $S$ otherwise.
An expression $h$ on $V$ \emph{solves} $f$ if
\[
  h|_S = 0 \iff f(S) = \mathsf{fall}
  \qquad \text{for every } S \subseteq V,
\]
where $h|_S$ denotes $h$ with the nails in $S$ removed (set to $0$).

The condition $f(V) = \mathsf{fall}$ is forced, not chosen:
removing every nail reduces any expression on~$V$ to~$0$,
so no expression can solve a function that hangs at~$V$.
By monotonicity it excludes a single function,
the constant $\mathsf{hang}$
--- which never falls, even when all nails are removed ---
the lone monotone specification that no wire can realize.
The opposite constant, $\mathsf{fall}$, is solved by the empty expression~$0$
(nothing is hung, so the picture has already fallen).
Every other monotone function with $f(V) = \mathsf{fall}$ is realizable too:
Demaine et al.~\cite{Demaine2014} and Wästlund~\cite{Waestlund2021}
each construct an expression that solves it.
(Wästlund excludes both constants as trivial;
we keep $\mathsf{fall}$, since the empty expression $0$ solves it.)
Our concern is not realizability but the search for \emph{shorter} solutions.

The puzzle of interest is itself a specification.
Write $\kof{k}{V}$ for the specification on $V$ with
\begin{equation}
\label{eq:kof}
  (\kof{k}{V})(S) = \mathsf{fall} \iff |S| \ge k.
\end{equation}
A solution to the \kn{k}{n} puzzle is then exactly an expression
solving $\kof{k}{V}$ on $V = \{1, \ldots, n\}$:
conditions (a)--(c) say precisely that $w|_S = 0$ iff $|S| \ge k$.

The conditions are monotone:
removing more nails cannot un-fall the picture.
If the picture falls when a set $S$ of nails is removed,
it also falls when any larger set $S' \supseteq S$ is removed,
since an expression already equal to $0$ remains $0$ when more variables are set to $0$.
Equivalently,
if the picture hangs with a set $S'$ of nails removed,
it also hangs with any subset $S \subseteq S'$ removed.

Monotonicity lets a correctness proof focus on the \emph{essential} removals
--- those whose required outcome does not already follow from another removal.
A required fall is essential only when no smaller removal already falls,
and a required hang only when no larger removal already hangs.
For $\kof{k}{V}$ the essential removals are therefore exactly
the sets of $k$ nails (which must fall) and the sets of $k-1$ nails (which must hang):
removing fewer than $k-1$ hangs by monotonicity from the $(k-1)$ case,
and removing more than $k$ falls by monotonicity from the $k$ case.
It thus suffices to check that every removal of $k$ nails makes the picture fall
and every removal of $k-1$ nails leaves it hanging.

For example,
the expression $1 + 2 - 1 - 2$ has length $4$.
It is nonzero, since no reduction rule applies:
no two adjacent terms are mutual inverses.
Setting $1$ to $0$ reduces it to $2 - 2 = 0$;
setting $2$ to $0$ reduces it to $1 - 1 = 0$.
Hence it solves the \kn{1}{2} puzzle:
it hangs as it stands, yet removing either nail (variable)
causes the picture (expression) to fall (vanish).

In group-theoretic terms,
$1 + 2 - 1 - 2$ is the commutator $[x_1, x_2]$ in the free group on two generators.
We define the \emph{commutator} of two expressions $a$ and $b$ as
\[
  \Comm{a}{b} \;=\; (a + b) - (b + a) \;=\; a + b - a - b,
\]
where the second equality uses the rule $-(b + a) = -a - b$.
The commutator thus \emph{measures non-commutativity}:
it is the difference between the two possible orders of summing $a$ and $b$.

Three key properties of the commutator follow immediately from this definition:

\begin{itemize}
\item \emph{Antisymmetry:}
  $-\Comm{a}{b} = \Comm{b}{a}$, because $-(a+b-a-b) = b+a-b-a$.
\item \emph{Vanishing condition:}
  $\Comm{a}{b} = 0$ iff $a + b = b + a$, that is, iff $a$ and $b$ commute under addition.
  In the free group,
  this happens exactly when $a$ and $b$ are both multiples of a common element
  --- in group theory, a fact about \emph{centralizers}.
\item \emph{Special cases that vanish:}
  $\Comm{0}{b} = \Comm{a}{0} = \Comm{a}{a} = \Comm{a}{-a} = 0$,
  and more generally $\Comm{pa}{qa} = 0$ for any integers $p, q$.
\end{itemize}

We will use commutators recursively:
the arguments $a$ and $b$ may themselves be expressions built from
variables, sums, and commutators.
The vanishing condition then becomes a useful test, which we phrase as follows.
Say that two expressions \emph{solve the same problem}
when they vanish under exactly the same removals.
The inverse $-h$ solves the same problem as $h$;
so does any nonzero multiple $p\,h$ (that is, $h$ repeated $p$ times, $p \ne 0$),
since a nonzero multiple of a nonzero expression is again nonzero
(in group theory, the free group is \emph{torsion-free});
and so does any \emph{conjugate} $g + h - g$ (a copy of $h$ wrapped in $g$ and $-g$),
since it too vanishes exactly when $h$ does.
Hence two multiples of a common element solve the same problem.
Reading the vanishing conditions through this lens, for nonzero $a$ and $b$:
$a + b = 0$ forces $b = -a$, and $\Comm{a}{b} = 0$ forces $a$ and $b$ to be common multiples
--- in either case $a$ and $b$ solve the same problem.
Contrapositively, expressions that solve \emph{different} problems
satisfy $a + b \ne 0$ and $\Comm{a}{b} \ne 0$;
in particular this holds when they have disjoint, nonempty supports.
This gives only non-vanishing; that $+$ and $\Comm{}{}$ actually realize $\wedge$ and $\vee$
needs more, as we discuss in Section~\ref{sec:binary-split}.

\subsection*{Demaine's and Wästlund's conventions}

Wästlund~\cite{Waestlund2021} adopts the opposite convention to Demaine:
his \kn{k}{n} means the picture \emph{hangs} iff at least $k$ nails remain.
The translation is:
\[
  \text{Demaine's \kn{k}{n}} \;=\; \text{Wästlund's \kn{(n-k+1)}{n}.}
\]
Throughout this paper we use Demaine's convention
unless we explicitly switch to Wästlund's in Section~\ref{sec:n-2-of-n},
where it is structurally more convenient.

\subsection*{Three easy cases}

Three cases of \kn{k}{n} admit short, optimal, and explicit solutions.

\begin{proposition}[$k=n$]
\label{prop:k-equals-n}
The minimum length of a solution to the \kn{n}{n} puzzle is exactly $n$,
attained by $1 + 2 + \cdots + n$.
\end{proposition}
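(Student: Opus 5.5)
The plan has two parts: show that $w = 1 + 2 + \cdots + n$ solves the \kn{n}{n} puzzle, and show that no shorter expression can.

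\emph{The word $w$ is a solution.} By the discussion of essential removals, it suffices to check two things: removing all $n$ nails makes $w$ fall, and removing any $n-1$ nails leaves it hanging. Removing all $n$ variables trivially gives $0$. If every variable except $j$ is removed, $w$ reduces to the single term $j$, which is nonzero. Condition (a), that $w$ itself is nonzero, holds because $w$ is fully reduced. No two adjacent terms are mutual inverses, since all variables are distinct and positive. So $w$ has length exactly $n$.

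\emph{No solution is shorter.} For the lower bound, let $h$ be any solution of the \kn{n}{n} puzzle, taken in fully reduced form. Fix a nail $j$ and remove all the other $n-1$ nails. Condition (c) requires $h|_{V\setminus\{j\}} \ne 0$. If the variable $j$ did not occur in the reduced word $h$, then every term of $h$ would be deleted and the result would be $0$, a contradiction. Hence every variable $1,\dots,n$ occurs in $h$, so the support of $h$ is all of $V$. Since distinct variables occupy distinct positions, the reduced word has at least $n$ terms.

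\emph{The one point needing care.} The main subtlety is that ``setting variables to $0$'' is applied to the reduced word, and a variable could appear in some unreduced representative but cancel out. I avoid this by working with the fully reduced form throughout: it is unique in the free group, and the substitution map is a homomorphism, so the argument is well defined. Given that, everything is immediate, and the minimum is exactly $n$.
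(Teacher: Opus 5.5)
Your proof is correct and follows the same route as the paper. You check that $1+2+\cdots+n$ falls only when every nail is removed, since any single surviving nail leaves a nonzero term, and you note that every variable must occur in the reduced form of any solution, because removing all nails but $j$ must leave it nonzero; you just spell out the essential-removals check and the well-definedness on reduced words that the paper leaves implicit.
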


\begin{proof}
This expression is $0$ iff all variables are set to $0$,
and remains nonzero under any proper subset of removals
(some surviving variable is left, and the reduced expression contains it).
Any solution must mention every variable, so length $\ge n$.
\end{proof}

\begin{proposition}[$k=n-1$]
\label{prop:k-equals-n-1}
The minimum length of a solution to the \kn{(n-1)}{n} puzzle is exactly $2n$,
attained by
\[
  1 + 2 + \cdots + n - 1 - 2 - \cdots - n.
\]
\end{proposition}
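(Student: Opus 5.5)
The plan has two parts: check that the displayed expression $w = 1 + 2 + \cdots + n - 1 - 2 - \cdots - n$ solves the \kn{(n-1)}{n} puzzle, and then show that every solution has length at least $2n$. Throughout we take $n \ge 2$; for $n=1$ the case $k=0$ is degenerate.

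\emph{Correctness.} Write $A = 1 + 2 + \cdots + n$ and $A^{\mathrm{rev}} = n + (n-1) + \cdots + 1$. Then the tail $-1-2-\cdots-n$ equals $-A^{\mathrm{rev}}$ by the rule for inverting a sum, so $w = A - A^{\mathrm{rev}}$.

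First, $w$ is nonzero. It is already reduced: the only junction between a positive and a negative term is $n$ followed by $-1$, and these are not mutual inverses when $n \ge 2$.

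Next, remove a set $S$ of nails and list the survivors in increasing order $a_1 < \cdots < a_m$. Then
\[
  w|_S = (a_1 + \cdots + a_m) - (a_m + \cdots + a_1).
\]
This vanishes iff the two sums $a_1 + \cdots + a_m$ and $a_m + \cdots + a_1$ are equal. Both are reduced words in distinct letters, so they are equal iff they coincide letter by letter, that is, iff $m \le 1$.

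By the essential-removal criterion from the preliminaries, it suffices to check two cases:
\begin{itemize}
\item Removing $n-1$ nails leaves $m=1$, giving $a - a = 0$, so the picture falls.
\item Removing $n-2$ nails leaves $m=2$, giving $a + b - a - b$, which is reduced and nonzero, so the picture hangs.
\end{itemize}

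\emph{Lower bound.} The key tool is the exponent sum $\sigma_i(h)$, the number of occurrences of $+i$ minus the number of occurrences of $-i$ in $h$. It is invariant under reduction and under setting other variables to $0$. Let $h$ be any solution and fix a variable $i$.
\begin{itemize}
\item Removing the other $n-1$ variables must make the picture fall. What remains is the multiple $\sigma_i(h)\,i$, so $\sigma_i(h) = 0$.
\item Suppose $i$ did not occur in the reduced form of $h$. Pick any $j \ne i$ and remove the $n-2$ variables other than $i$ and $j$. The result is the multiple $\sigma_j(h)\,j$, and $\sigma_j(h)=0$ by the first point, so the result is $0$. But this removal must leave the picture hanging, a contradiction.
\end{itemize}
Hence every variable occurs in the reduced form of $h$, with exponent sum $0$. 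So each variable occurs at least twice, and the length of $h$ is at least $2n$.

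The main point of care is the equality test $A' = A'^{\mathrm{rev}}$ in the correctness part. It rests on uniqueness of reduced forms in the free group, which the notation section takes for granted; the rest is routine bookkeeping.
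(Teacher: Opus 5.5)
Your proof is correct and follows the same route as the paper's. The correctness part is the same essential-removal check: one survivor gives $a - a = 0$, and two survivors give the nonzero commutator $a+b-a-b$. The lower bound is the same exponent-sum argument.

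Your extra step showing that every variable must occur in a solution is a real improvement. The paper's claim that zero net exponent ``forces $j$ to appear at least twice'' silently assumes that $j$ appears at all.
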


\begin{proof}
Let $w = 1 + 2 + \cdots + n - 1 - 2 - \cdots - n$.

\emph{Nonzero:} $w$ is freely reduced (no two adjacent terms are mutual inverses),
so $w \ne 0$.

\emph{Fall under $n-1$ removals:} When $n-1$ variables are removed,
exactly one variable $j$ survives, and the expression reduces to $j - j = 0$.

\emph{Hang under fewer removals:} When exactly $n-2$ variables are removed,
the two survivors $a < b$ give $a + b - a - b = \Comm{a}{b} \ne 0$ (since $a \ne b$,
they do not commute).
By monotonicity, the picture also hangs under any fewer removals.

\emph{Lower bound $2n$:} In any solution,
fixing variable $j$ and setting all other $n-1$ variables to $0$
removes $n-1 \ge k = n-1$ variables, so the resulting expression must be $0$.
Hence the net exponent of $j$ --- its number of $+j$ occurrences minus its number of $-j$ ---
is $0$ in any solution, forcing $j$ to appear at least twice.
Since this holds for every $j$, length $\ge 2n$.
\end{proof}

\begin{proposition}[$k=1$]
\label{prop:k-equals-1}
The minimum length of a solution to the \kn{1}{n} puzzle is $\Theta(n^2)$.
For $n$ a power of two, length $n^2$ is attained by the balanced commutator tree,
recursively defined by
\[
  \Hang{1}{1} = 1, \qquad
  \Hang{1}{2n} = \Comm{\Hang{1}{n}\text{ on }\{1,\ldots,n\}}{\Hang{1}{n}\text{ on }\{n+1,\ldots,2n\}}.
\]
The lower bound $\Omega(n^2)$ is due to Gartside and Greenwood~\cite{GartsideGreenwood2007};
the upper-bound construction goes back at least to \cite{Taylor2002}.
\end{proposition}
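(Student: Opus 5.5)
We treat the two parts of Proposition~\ref{prop:k-equals-1} separately: first the upper-bound construction with its exact length $n^2$, then the lower bound, which we take from Gartside and Greenwood~\cite{GartsideGreenwood2007}.

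\emph{Upper bound.} We argue by induction on $n=2^i$ and prove two things together: $\Hang{1}{n}$ solves $\kof{1}{V}$, and its reduced length is exactly $n^2$.

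The base case $n=1$ is immediate. The expression $1$ is nonzero, vanishes when nail $1$ is removed, and has length $1$.

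For the inductive step, write $a=\Hang{1}{n}$ on $\{1,\ldots,n\}$ and $b=\Hang{1}{n}$ on $\{n+1,\ldots,2n\}$.
\begin{itemize}
\item \emph{Falling.} Removing any single nail lies in the support of $a$ or of $b$. By induction that argument vanishes, and $\Comm{0}{b}=\Comm{a}{0}=0$.
\item \emph{Hanging.} With nothing removed, $a$ and $b$ are nonzero with disjoint supports. By the observation in Section~\ref{sec:notation}, $\Comm{a}{b}\ne0$.
\end{itemize}
Only the removals of sizes $1$ and $0$ are essential, so by monotonicity this suffices.

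For the length, note that $a$ and $b$ are already freely reduced and have disjoint supports. The concatenation $a+b-a-b$ therefore has no cancellation at its three junctions, since each junction joins letters on different variables. Hence the length is $4\cdot n^2=(2n)^2$, which closes the induction.

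\emph{Lower bound.} We need $\Omega(n^2)$ for all $n$. We would not reprove this and instead cite~\cite{GartsideGreenwood2007}, as the statement does. Their argument is the genuinely hard step here; it is a combinatorial counting argument on the word.

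A cheap bound is available directly. Removing all nails except $j$ leaves the signed occurrences of $j$, which must cancel, so each variable occurs at least twice. In fact each occurs at least four times, because a variable occurring exactly twice, as $+j\ldots-j$, leaves the word as a conjugate of its middle part when the other nails are kept. This only yields a linear bound, though. The quadratic bound is what we attribute to Gartside--Greenwood.

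For general $n$ not a power of two, the upper bound $O(n^2)$ follows by padding. We remove $2^{\lceil\log_2 n\rceil}-n$ nails from a balanced tree, or equivalently use a tree of unbalanced splits. Together with the cited lower bound this gives $\Theta(n^2)$.
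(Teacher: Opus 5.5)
Your upper-bound argument matches the paper's. Like the paper, you argue by induction on $n$: a commutator of two nonzero expressions with disjoint supports is nonzero, it vanishes once either argument does, and the length satisfies $L(2n)=4L(n)$. Your explicit check that the three junctions of $a+b-a-b$ join different variables, so no free reduction occurs, is a good addition that justifies ``exactly $n^2$.'' Citing Gartside--Greenwood for $\Omega(n^2)$ is also what the paper does.

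The aside claiming that every variable occurs at least four times is false and should be deleted. Suppose $j$ occurs exactly twice, and write $w = P + j + M - j + Q$ with $P$, $M$, $Q$ free of $j$. Removing nail $j$ forces $P + M + Q = 0$, so $Q + P = -M$ and $w = P + \Comm{j}{M} - P$, a conjugate of $\Comm{j}{M}$. This is nonzero iff $M \ne 0$, and it falls under removal of another nail $i$ iff $M|_{\{i\}} = 0$. So $w$ is a solution exactly when $M$ solves \kn{1}{(n-1)} on the remaining nails, which certainly happens. Examples are $1+2-1-2$, where each nail occurs twice, and $\Comm{\Hang{1}{n-1}}{n}$, where nail $n$ occurs twice. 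The paper's confirmed minimum $10$ for \kn{1}{3} is already below $4\cdot 3$. Only the ``at least twice'' bound survives. Your proof never uses this aside, so nothing else breaks.

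Also reword the padding step for $n$ not a power of two. ``Removing'' nails in the puzzle's sense means setting them to $0$, and that annihilates any \kn{1}{N} solution. What you want is to prune padding leaves from the commutator tree: replace $\Comm{A}{B}$ by $A$ whenever $B$ involves only padding nails. This gives your tree of unbalanced splits. It is correct by the disjoint-combination lemma, and its length is at most $n \cdot 2^{\lceil \log_2 n \rceil} < 2n^2$.
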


\begin{proof}[Sketch]
The recursion $L(2n) = 4 L(n)$ with $L(1) = 1$ gives $L(2^i) = 4^i = (2^i)^2$.
Correctness follows by induction:
a commutator $\Comm{A}{B}$ with $A, B$ on disjoint variable sets vanishes
iff $A = 0$ or $B = 0$,
and inductively each $\Hang{1}{n}$ vanishes iff some variable in its support is removed.
Hence $\Comm{\Hang{1}{n}\text{-on-}L}{\Hang{1}{n}\text{-on-}R}$ vanishes
iff some variable on either side is removed
--- i.e., iff any of the $2n$ variables is removed.
\end{proof}

The construction of Proposition~\ref{prop:k-equals-1} is, in fact,
the simplest case of Demaine-perspective binary splitting
(Section~\ref{sec:binary-split}):
for $k=1$, the disjunction at each split has just two terms
(``some variable removed in $L$'' or ``some variable removed in $R$''),
giving a 2-leaf commutator that is the balanced tree.

The next case in the co-rank family, \kn{(n-2)}{n}, is genuinely non-trivial;
it is the subject of Section~\ref{sec:n-2-of-n}.

\section{Demaine-perspective binary splitting}
\label{sec:binary-split}

In this section we present Demaine-perspective binary splitting,
a deterministic and explicit divide-and-conquer construction for the \kn{k}{n} puzzle.
We first describe the construction for general~$k$,
and then prove a closed-form length expression for the case $k = 2$ on powers of two.
The correctness of the construction,
for all $k$, is taken up in Section~\ref{sec:correctness}.

\subsection{The general construction}
\label{sec:general}

Demaine-perspective binary splitting builds a \kn{k}{n} solution from solutions of smaller puzzles.
Choose any split of the $n$ nails into two nonempty parts:
$L$ of size $n_1$ and $R$ of size $n_2$, with $n = n_1 + n_2$ and $n_1, n_2 \ge 1$.
The construction is correct for \emph{every} such split;
it is most efficient when $n_1$ and $n_2$ are as equal as possible,
and for the length analysis we will take $n_1 = \lceil n/2 \rceil$ and $n_2 = \lfloor n/2 \rfloor$.
In Demaine's convention the picture falls as soon as $k$ nails are removed;
we classify a removal by how it splits across the two parts.
If $j$ of the removed nails lie in $L$, the remaining removals lie in $R$,
and ``at least $k$ removed'' becomes ``at least $j$ removed in $L$ and at least $k-j$ removed in $R$.''
Hence
\begin{equation}
\label{eq:disjunction}
  \text{\kn{k}{n} falls} \iff \bigvee_{j} \Bigl[ (\text{$\ge j$ removed in $L$}) \;\wedge\; (\text{$\ge k-j$ removed in $R$}) \Bigr],
\end{equation}
where $j$ ranges over the feasible values $\max(0,\,k-n_2) \le j \le \min(k,\,n_1)$
--- those for which neither part is asked to lose more nails than it contains.

The two logical operations have direct algebraic counterparts,
provided the parts they combine have disjoint supports.

\begin{lemma}[Disjoint combination]
\label{lem:disjoint}
Let $h_1$ solve a specification $f_1$ on a nail set $V_1$,
and $h_2$ solve a specification $f_2$ on a disjoint set $V_2$.
Then, over $V_1 \cup V_2$,
\[
  h_1 + h_2 \ \text{ solves }\ f_1 \wedge f_2,
  \qquad
  \Comm{h_1}{h_2} \ \text{ solves }\ f_1 \vee f_2.
\]
The specifications are arbitrary:
the proof uses only the disjointness of the supports,
not any special structure of $f_1$ or $f_2$.
\end{lemma}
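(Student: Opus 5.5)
Fix a removal set $S \subseteq V_1 \cup V_2$ and write $S_1 = S \cap V_1$, $S_2 = S \cap V_2$. Because the supports are disjoint, removal acts separately on the two parts: $(h_1+h_2)|_S = h_1|_{S_1} + h_2|_{S_2}$ and $\Comm{h_1}{h_2}|_S = \Comm{h_1|_{S_1}}{h_2|_{S_2}}$. Also $f_1 \wedge f_2$ and $f_1 \vee f_2$ are evaluated at $S$ as $f_1(S_1)$ combined with $f_2(S_2)$. So it suffices to prove two facts about expressions $a$ and $b$ with disjoint supports, namely $a = h_1|_{S_1}$ on $V_1$ and $b = h_2|_{S_2}$ on $V_2$:
\begin{enumerate}
\item $a + b = 0$ iff $a = 0$ and $b = 0$;
\item $\Comm{a}{b} = 0$ iff $a = 0$ or $b = 0$.
\end{enumerate}
Given these, $h_1$ solves $f_1$ gives $a = 0 \iff f_1(S_1) = \mathsf{fall}$, and likewise for $b$. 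With fall read as the larger value, ``$a+b$ vanishes iff both vanish'' is exactly $\wedge$, and ``the commutator vanishes iff one vanishes'' is exactly $\vee$.

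The ``if'' directions are immediate. For $a+b$ we use $0+0=0$. For the commutator we use $\Comm{0}{b}=\Comm{a}{0}=0$ from the list of special cases.

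For the converse of (1), assume $a+b=0$. Then $a = -b$. The left side has support in $V_1$ and the right side in $V_2$. A freely reduced word is unique in the free group, so both sides equal a reduced word supported in $V_1 \cap V_2 = \emptyset$, which is the empty word. Hence $a = b = 0$. A more elementary route: concatenating the reduced forms of $a$ and $b$ gives a word with no cancellation at the junction, since the two letters there come from disjoint alphabets. So $a+b$ has length $\mathrm{len}(a)+\mathrm{len}(b)$, which is $0$ only if both lengths are $0$.

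The converse of (2) is the main obstacle. Assume $a, b \ne 0$; I will show $a+b-a-b \ne 0$. The plan is to invoke the vanishing condition stated earlier: $\Comm{a}{b}=0$ forces $a$ and $b$ to be multiples of a common element $c$. That is, $a = pc$ and $b = qc$ with $p, q \ne 0$ because $a, b \ne 0$. Write $c$ as a reduced word, $c = g + c' - g$ with $c'$ cyclically reduced. Then $pc = g + pc' - g$, and this is reduced, so every variable of $c'$ occurs in $a$ and in $b$. Since $c' \ne 0$, $a$ and $b$ share a variable, contradicting disjointness.

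If I want to avoid the centralizer fact, I would argue directly about cancellation instead. Write $a = u + a_0 - u$ and $b = v + b_0 - v$, each reduced with $a_0$ and $b_0$ cyclically reduced and nonempty. In the concatenation $a+b-a-b$, no letter of $a^{\pm1}$ can cancel a letter of $b^{\pm1}$, because they come from disjoint alphabets. Every junction is therefore between letters from different alphabets. The concatenation is already freely reduced, has length $2\,\mathrm{len}(a)+2\,\mathrm{len}(b) > 0$, and so is nonzero. This cancellation argument is cleaner, and I would present it as the proof, with the centralizer remark as intuition.
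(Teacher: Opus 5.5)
Your proof is correct. Its skeleton matches the paper's: removal acts part by part, so everything reduces to ``$a+b=0$ iff both vanish'' and ``$\Comm{a}{b}=0$ iff one vanishes'' for $a,b$ over disjoint alphabets. Your first treatment of the sum is also the paper's, since the reduced forms of $a$ and $-b$ would have to coincide and hence use no letters at all.

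The difference is in the commutator step. The paper relies on the centralizer fact: commuting nonzero elements are multiples of a common element. By torsion-freeness such multiples vanish under exactly the same removals, and nonzero expressions on disjoint supports do not (Section~\ref{sec:notation}). Your first route also starts from the centralizer fact, but finishes differently, by showing via a cyclically reduced $c'$ that common multiples must share a letter.

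The argument you actually propose to present avoids centralizers altogether and uses only uniqueness of reduced words. The reduced pieces $a$, $b$, $-a$, $-b$ meet only at junctions between disjoint alphabets, so $a+b-a-b$ is already freely reduced, of length $2\,\mathrm{len}(a)+2\,\mathrm{len}(b)>0$. The decompositions $a=u+a_0-u$ and $b=v+b_0-v$ play no role and can be dropped. This route is more elementary and self-contained. It also yields exact lengths, which is precisely the ``no free reduction ever occurs'' fact the paper later invokes for W\"astlund's disjoint-support construction (Theorem~\ref{thm:n-2-of-n}).

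What the paper's formulation buys instead is portability. Once supports overlap, junctions can cancel and the normal-form argument says nothing. By contrast, ``a spurious vanishing makes the two parts multiples of a common element, hence solutions of the same problem'' is exactly the mechanism that Lemma~\ref{lem:combination} and Theorem~\ref{thm:tree} extend to the overlapping disjuncts of the construction.
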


\begin{proof}
A removal $S$ acts on each part separately
(in group theory it is a \emph{homomorphism}, so it respects $+$ and $\Comm{}{}$):
$(h_1 + h_2)|_S = h_1|_S + h_2|_S$ and $\Comm{h_1}{h_2}|_S = \Comm{h_1|_S}{h_2|_S}$,
with $h_i|_S$ supported in $V_i$.
\emph{Sum:} $h_1|_S + h_2|_S = 0$ would force $h_2|_S = -h_1|_S$,
an element common to $V_1$ and $V_2$, hence $0$
(in group theory, the subexpressions on $V_1$ and $V_2$ lie in \emph{free factors}, meeting only in $0$);
so the sum vanishes iff $h_1|_S = h_2|_S = 0$,
that is, iff $f_1$ and $f_2$ both fall at $S$;
hence $h_1 + h_2$ solves $f_1 \wedge f_2$.
\emph{Commutator:} if either part vanishes, so does the commutator;
if both are nonzero, they have disjoint nonempty supports
and hence are not multiples of a common element,
so they do not commute and $\Comm{h_1|_S}{h_2|_S} \ne 0$.
Thus the commutator vanishes iff $f_1$ or $f_2$ falls at $S$;
hence $\Comm{h_1}{h_2}$ solves $f_1 \vee f_2$.
\end{proof}

The disjointness hypothesis is essential: without it, both statements can fail.
For the sum, overlapping supports may cancel,
so that $h_1 + h_2$ vanishes although neither part does.
The commutator is subtler.
Consider the puzzle that falls exactly when nail~$3$ is removed together with nail~$1$ or nail~$2$,
the condition $(1 \vee 2) \wedge 3$.
A correct solution is $\Comm{1}{2} + 3$:
the disjunction $1 \vee 2$ on the disjoint support $\{1,2\}$, conjoined with~$3$.
Distributing $\wedge$ over~$\vee$ rewrites the condition as $(1 \wedge 3) \vee (2 \wedge 3)$,
inviting the ``solution'' $\Comm{1+3}{\,2+3}$.
But its arguments share nail~$3$:
removing $1$ and $2$ while leaving $3$ sends $1+3 \mapsto 3$ and $2+3 \mapsto 3$,
so the commutator collapses to $\Comm{3}{3} = 0$ and the picture wrongly falls.
The shared nail has made the two arguments multiples of a common element,
exactly where the commutator vanishes.
The disjuncts of our construction overlap in just this way,
so Lemma~\ref{lem:disjoint} does not cover the commutator tree that combines them;
that case is the substance of the correctness proof (Section~\ref{sec:correctness}).

The $j$-th disjunct is then the subexpression
\[
  D_j = \begin{cases}
    \Hang{k}{R}, & j = 0,\\[2pt]
    \Hang{j}{L} + \Hang{(k-j)}{R}, & 0 < j < k,\\[2pt]
    \Hang{k}{L}, & j = k,
  \end{cases}
\]
where $\Hang{k}{V}$ denotes any solution of the specification $\kof{k}{V}$;
the cases $j = 0$ and $j = k$ drop the vacuous condition ``$\ge 0$ removed,''
which always holds.
These subexpressions, one per feasible $j$,
are then combined into a single expression by a \emph{commutator tree},
realizing their disjunction.
When $n_1, n_2 \ge k$ all $k+1$ values of $j$ are feasible;
otherwise the range is shorter.

The recursion bottoms out at the easy cases of Section~\ref{sec:notation}.
Notably, for $k = 1$ every split has exactly two disjuncts
--- ``something removed in $L$'' or ``something removed in $R$'' ---
so the construction reduces to the two-leaf commutator
$\Comm{\Hang{1}{L}}{\Hang{1}{R}}$;
with balanced splits throughout,
this is the commutator tree of Proposition~\ref{prop:k-equals-1}.

\paragraph{Huffman placement.}
A commutator tree over the disjuncts may take many shapes,
all realizing the same disjunction but with different total lengths.
Since each commutator $\Comm{A}{B} = A + B - A - B$ duplicates both of its arguments,
a leaf placed at depth $d$ contributes $2^d$ times its own length to the total.
Minimizing the total length is therefore a Huffman problem on the leaf lengths $|D_j|$:
one repeatedly combines the two shortest current subexpressions into a commutator,
which places the longest leaves nearest the root.
We use Huffman placement at every split.

\paragraph{Correctness and length.}
Two points remain.
First, that the construction actually solves the \kn{k}{n} puzzle.
Each conjunction here --- a cross term $A_j + B_{k-j}$ on the disjoint halves $L, R$ ---
is sound on its own by Lemma~\ref{lem:disjoint}.
The commutator tree is not: its leaves, the disjuncts, share supports
--- for instance $D_0$ and $D_1$ both constrain $R$ ---
so Lemma~\ref{lem:disjoint} does not apply to it,
and correctness of the overall disjunction needs the inductive argument of Section~\ref{sec:correctness}.
Second,
the total length is governed by a recurrence determined by the Huffman placement of the leaves.
We solve this recurrence in closed form for the case $k = 2$
with balanced splits in the remainder of this section;
the general case is left open (Section~\ref{sec:open}).

\subsection{The case $k = 2$}
\label{sec:bs-construction}

We now specialize to $k = 2$ and to balanced splits,
taking $n$ even and $L = \{1, \ldots, m\}$, $R = \{m+1, \ldots, n\}$ with $m = n/2$.
The decomposition has three disjuncts:
``both removed in $L$,'' ``both removed in $R$,'' and ``one removed in each half.''
The first two are \kn{2}{m} conditions on $L$ and on $R$;
the third is the conjunction of a \kn{1}{m} condition on $L$ with a \kn{1}{m} condition on $R$,
realized by their sum.
The three resulting subexpressions are combined into a three-leaf commutator tree.
With three leaves the only shape is $\Comm{X}{\Comm{Y}{Z}}$,
one leaf at depth $1$ and two at depth $2$;
Huffman placement puts the longest of the three at depth $1$ (a single doubling)
and the two shorter at depth $2$ (a double doubling).

\subsection{Correctness of the construction}
\label{sec:correctness}

We show that the construction of Section~\ref{sec:general}
solves $\kof{k}{V}$ on $V = \{1, \ldots, n\}$,
for every $k$ and every split; explicitly,
\begin{equation}
\label{eq:char}
  w|_S = 0 \iff |S| \ge k \qquad \text{for all } S,
\end{equation}
where $w|_S$ is $w$ with the nails in $S$ removed (Section~\ref{sec:notation}).
The construction realizes the logical identity
\begin{equation}
\label{eq:split}
  \kof{k}{V} \;=\; \bigvee_j \bigl( \kof{j}{L} \wedge \kof{(k-j)}{R} \bigr)
\end{equation}
of Section~\ref{sec:general}:
it builds each cross term with a sum
and combines the terms with a commutator tree.
The sums are sound by Lemma~\ref{lem:disjoint}, since $L$ and $R$ are disjoint.
The one step still in doubt
--- already flagged by the $(1 \vee 2) \wedge 3$ failure of
Section~\ref{sec:general} ---
is that the commutator tree realizes the disjunction,
even though its leaves share supports.
We isolate the exact obstruction,
phrase it as a condition on specifications,
and then discharge that condition for the construction.

Both algebraic combiners can vanish spuriously when supports overlap.
A removal acts term by term, so
$(h_1 + h_2)|_S = h_1|_S + h_2|_S$
and $\Comm{h_1}{h_2}|_S = \Comm{h_1|_S}{h_2|_S}$.
With both parts nonzero, a sum vanishes exactly when $h_2|_S = -h_1|_S$,
and a commutator exactly when $h_1|_S$ and $h_2|_S$ commute;
either way (Section~\ref{sec:notation}) the two are multiples of a common
element, and so solve the same problem.
A single condition rules both out.
Say that specifications $g_1$ and $g_2$ \emph{separate above} a set $S$
if $g_1(S') \ne g_2(S')$ for some $S' \supseteq S$.

\begin{lemma}[Combination]
\label{lem:combination}
Let $h_1$ solve $g_1$ and $h_2$ solve $g_2$,
and suppose $g_1$ and $g_2$ separate above every $S$ at which both hang.
Then $h_1 + h_2$ solves $g_1 \wedge g_2$,
and $\Comm{h_1}{h_2}$ solves $g_1 \vee g_2$.
On disjoint supports the hypothesis is automatic,
so this generalizes Lemma~\ref{lem:disjoint}.
\end{lemma}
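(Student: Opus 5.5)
The plan is to fix an arbitrary removal $S$ and compare the vanishing of $h_1+h_2$ and $\Comm{h_1}{h_2}$ at $S$ with the values of $g_1\wedge g_2$ and $g_1\vee g_2$ at $S$. As in Lemma~\ref{lem:disjoint}, removal acts term by term:
\[
  (h_1+h_2)|_S = h_1|_S + h_2|_S,
  \qquad
  \Comm{h_1}{h_2}|_S = \Comm{h_1|_S}{h_2|_S},
\]
and $h_i|_S = 0$ iff $g_i$ falls at $S$. We then split into cases according to which of $g_1, g_2$ fall at $S$.

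\emph{At least one falls.} Suppose, say, $h_1|_S = 0$. Then the commutator reduces to $\Comm{0}{h_2|_S} = 0$, matching $g_1\vee g_2 = \mathsf{fall}$. The sum equals $h_2|_S$, which vanishes iff $g_2$ falls, matching $g_1\wedge g_2$. The case $h_2|_S = 0$ is symmetric. This case needs nothing beyond the special cases of the commutator listed in Section~\ref{sec:notation}.

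\emph{Both hang.} This is the substantive case. We must show that $h_1|_S + h_2|_S \ne 0$ and $\Comm{h_1|_S}{h_2|_S} \ne 0$. Suppose instead that one of them vanishes. By the discussion in Section~\ref{sec:notation}, nonzero $a,b$ with $a+b=0$ or $\Comm{a}{b}=0$ are multiples of a common element, so $a = pc$ and $b = qc$ for some nonzero integers $p, q$. This is where we use the fact that the free group is torsion-free and that centralizers are cyclic.

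The key step is to lift this relation to supersets. For any $S' \supseteq S$, removing further nails is again a homomorphism, so
\[
  h_1|_{S'} = p\,(c|_{S'\setminus S}), \qquad h_2|_{S'} = q\,(c|_{S'\setminus S}).
\]
Since $p, q \ne 0$ and the group is torsion-free, both vanish exactly when $c|_{S'\setminus S}$ vanishes. Hence $g_1(S') = g_2(S')$ for every $S' \supseteq S$. This contradicts the hypothesis that $g_1$ and $g_2$ separate above $S$, so both combinations are nonzero at $S$, matching $g_1\wedge g_2 = g_1\vee g_2 = \mathsf{hang}$. I expect this lifting to be the main point needing care. The thing to get right is that the common element $c$ is taken at $S$ and then restricted further, rather than choosing a new common element at each $S'$.

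\emph{The generalization claim.} With disjoint supports $V_1, V_2$, suppose both $g_i$ hang at $S$. Then $S$ misses some nail of $V_1 \cup V_2$; otherwise both $h_i|_S$ would be $0$. Let $S'$ be $S$ together with all remaining nails of $V_1$, so that $h_1|_{S'} = 0$. Since $h_2|_{S'} = h_2|_S \ne 0$, the specifications differ at $S'$, and the hypothesis holds. Lemma~\ref{lem:disjoint} then follows as a special case.
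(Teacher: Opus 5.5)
Your proof is correct and follows essentially the same route as the paper: removal acts term by term, the only nontrivial case is where both $h_i|_S$ are nonzero, a spurious vanishing makes them multiples of a common element, so they vanish under the same $S' \supseteq S$, contradicting separation; the disjoint-support case is handled by additionally removing all of $V_1$. Your explicit lifting $h_i|_{S'} = p_i\,(c|_{S'\setminus S})$ (with $p_1 = p$, $p_2 = q$) spells out exactly what the paper packs into its appeal to ``solve the same problem'' from Section~\ref{sec:notation}.
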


\begin{proof}
Each combiner has an automatic direction:
if $g_1 \wedge g_2$ falls at $S$ then $h_1|_S = h_2|_S = 0$ and the sum vanishes,
while if $g_1 \vee g_2$ falls at $S$ then some $h_i|_S = 0$
and the commutator vanishes.
For the converses, suppose a combiner vanishes where its target hangs.
The sum's target hangs where some $g_i$ hangs;
were only one part nonzero the sum could not vanish, so both $h_i|_S$ are nonzero.
The commutator's target hangs only where both $g_i$ hang,
so again both $h_i|_S$ are nonzero.
A spurious vanishing now makes $h_1|_S$ and $h_2|_S$ multiples of a common
element, hence (Section~\ref{sec:notation}) solutions of the same problem:
they vanish under the same $S' \supseteq S$, so $g_1$ and $g_2$ agree above $S$.
But both hang at $S$, so they separate above $S$ --- a contradiction.
On disjoint supports $V_1, V_2$ the hypothesis is free:
at any $S$ where both hang, additionally removing all of $V_1$
makes $g_1$ fall (as $g_1(V_1) = \mathsf{fall}$) but leaves $g_2$ unchanged.
\end{proof}

The construction nests many commutators, not one.
Lemma~\ref{lem:combination} is a sufficient test on specifications;
for a whole tree the next theorem makes the criterion exact, and milder:
a commutator may harmlessly collapse at a removal where the target already
falls, so only the removals where the target hangs need care.

\begin{theorem}[Commutator trees as disjunctions]
\label{thm:tree}
Let $T$ be a commutator tree whose leaves solve specifications
$g_1, \ldots, g_m$, and let $f = g_1 \vee \cdots \vee g_m$.
Then $T$ solves $f$ if and only if no internal node of $T$ vanishes
at an $S$ where $f$ hangs.
In particular, $T$ solves $f$ if at every such $S$
the specifications solved by the two subtrees at each internal node
separate above it.
\end{theorem}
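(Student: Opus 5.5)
The plan is to prove the equivalence by induction on the tree, and then derive the ``in particular'' clause from the argument of Lemma~\ref{lem:combination}.

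For each internal node $v$ of $T$, let $f_v$ denote the disjunction of the $g_i$ over the leaves below $v$, and let $T_v$ be the subtree rooted at $v$. The main claim, proved by induction on height, is the following: if no internal node vanishes at any $S$ where $f$ hangs, then $T_v|_S = 0 \iff f_v(S) = \mathsf{fall}$ for every $S$ at which $f$ hangs.

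The ``fall'' direction holds everywhere, with no hypothesis needed. If some leaf $g_i$ below $v$ falls at $S$, then that leaf vanishes at $S$. A commutator with a zero argument is zero, so every ancestor of that leaf vanishes too, and in particular $T_v|_S = 0$. Consequently, whenever $f$ falls at $S$, the root vanishes.

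The ``hang'' direction is exactly what the hypothesis supplies. If $f$ hangs at $S$, then every $g_i$ hangs at $S$, so every leaf is nonzero there. By hypothesis no internal node vanishes at $S$; in particular the root is nonzero. Together the two directions give $T|_S = 0 \iff f(S) = \mathsf{fall}$, so $T$ solves $f$.

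For the converse, suppose $T$ solves $f$ and some internal node $v$ vanishes at an $S$ where $f$ hangs. Every ancestor of $v$ is a commutator with one argument equal to the vanished node, so each ancestor vanishes, and the root vanishes at $S$. This contradicts $T|_S \ne 0$, which holds because $f$ hangs at $S$. This proves the equivalence.

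For the ``in particular'' clause, fix $S$ where $f$ hangs. Proceed bottom-up: assume both children $T_a, T_b$ of a node $v$ are nonzero at $S$ and solve their specifications $f_a, f_b$ at all sets containing $S$. The step I expect to be the main obstacle is this last assumption, because $T_a$ is only known to solve $f_a$ at sets $S'$ where $f$ also hangs. Separation, however, is witnessed at some $S' \supseteq S$, where $f$ may already fall.

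To handle this, I would use the separation hypothesis in the form stated: at each $S$ where $f$ hangs, the specifications solved by the two subtrees are assumed to be genuine specifications solved by those subtrees. I read ``the specifications solved by the two subtrees'' as presupposing that each subtree solves some specification on all sets. Under that reading, a vanishing $\Comm{T_a|_S}{T_b|_S} = 0$ with both arguments nonzero makes $T_a|_S$ and $T_b|_S$ multiples of a common element. Each of them is a restriction of a solution, so they vanish under the same further removals $S' \supseteq S$. Hence $f_a$ and $f_b$ agree above $S$, contradicting separation; this is exactly the argument of Lemma~\ref{lem:combination}.

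It follows that no internal node vanishes at any $S$ where $f$ hangs, and the main equivalence then gives that $T$ solves $f$.
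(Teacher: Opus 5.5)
Your proposal is correct and follows the paper's proof essentially step for step. The fall direction comes from a vanishing leaf propagating to the root, the equivalence from any vanishing internal node propagating to the root, and the sufficient condition from a bottom-up application of the argument of Lemma~\ref{lem:combination}. The reading you adopt for the ``in particular'' clause is not an extra presupposition but automatic: every expression $h$ solves exactly one specification, the one that falls at $S$ precisely when $h|_S = 0$ (monotone, and falling at $V$). That is why the paper can speak of ``the'' specification solved by a subtree even when it differs from the disjunction of its leaves, as with $\Comm{1+3}{1+4}$ solving $\kof{2}{\{1,3,4\}}$.
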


\begin{proof}
The minterms of $f$ are exactly those of the $g_i$, each realized by a leaf.
If $f$ falls at $S$, some leaf vanishes; a commutator with a vanishing argument
vanishes, and this propagates to the root, so $T|_S = 0$ whatever the
internal nodes do.
Hence $T$ solves $f$ precisely when, in addition, $T|_S \ne 0$ wherever
$f$ hangs.
A vanishing internal node propagates to the root,
and a nonvanishing root is itself the topmost internal node,
so $T|_S \ne 0$ at an $f$-hang $S$ iff no internal node vanishes there;
this is the equivalence.
For the sufficient condition: at an $f$-hang $S$ no leaf is covered,
so every leaf hangs and, inductively, the two subtrees at each node are nonzero;
if their specifications separate above $S$ they cannot commute
(Lemma~\ref{lem:combination}), so the node is nonzero.
\end{proof}

There is no counterpart of Theorem~\ref{thm:tree} for sums.
A commutator tree falls as soon as any one leaf is covered,
so a collapse at a removal where the tree should fall does no harm;
a sum falls only when \emph{all} its terms vanish, and has no such slack.
An overlapping sum must therefore avoid cancellation
at every removal where it hangs:
the disjunctive side is the forgiving one.

For the construction, fix the split $V = L \cup R$ of Section~\ref{sec:general},
with $|L| = n_1$, $|R| = n_2$;
for a set $S$ write $\ell = |S \cap L|$ and $r = |S \cap R|$.
The leaves are the disjuncts:
$D_j$ solves the cross term $\kof{j}{L} \wedge \kof{(k-j)}{R}$
(Lemma~\ref{lem:disjoint}, disjoint halves),
which depends on $S$ only through $(\ell, r)$
and falls on the quadrant $\ell \ge j$, $r \ge k - j$ with corner $(j, k-j)$.
A subtree over a leaf-set $J$ should compute
\begin{equation}
\label{eq:fJ}
  f_J \;:=\; \bigvee_{j \in J} \bigl( \kof{j}{L} \wedge \kof{(k-j)}{R} \bigr),
\end{equation}
a union of quadrants.
Call a specification on $L \cup R$ a \emph{staircase}
if it depends on $S$ only through $(\ell, r)$ and is monotone
--- an up-set in the $(\ell, r)$ grid, fixed by its minimal corners.
Each $f_J$ is a staircase, and staircases over disjoint index sets separate.
Figure~\ref{fig:staircase} illustrates the staircase for $k = 3$,
$n_1 = n_2 = 3$.

\begin{figure}[ht]
  \centering
  \includegraphics[width=0.55\linewidth]{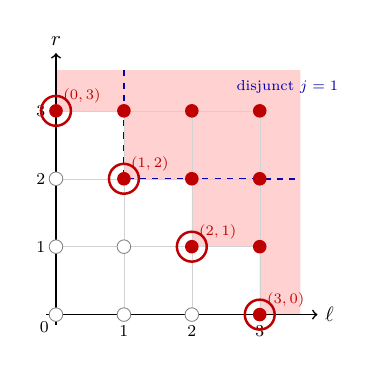}
  \caption{Staircase fall region of $\kof{k}{V}$ for $k = 3$
    and $n_1 = n_2 = 3$.
    Filled points are fall, open points hang;
    the four ringed corners $(j, k - j)$ mark the disjuncts
    $j = 0, 1, 2, 3$.
    Within the full feasible range of $j$,
    consecutive corners always differ by $(+1, -1)$
    --- a unit diagonal step.
    A sub-staircase $f_J$ over a non-contiguous $J$
    (for instance $J = \{0, 2\}$, with corners $(0,3)$ and $(2,1)$)
    has a larger step.
    The blue dashes outline the quadrant of disjunct $j = 1$.}
  \label{fig:staircase}
\end{figure}

\begin{lemma}[Staircase separation]
\label{lem:frontier}
Let $J_1, J_2$ be disjoint sets of feasible indices,
and let $S$ be a set at which neither $f_{J_1}$ nor $f_{J_2}$ falls.
Then $f_{J_1}$ and $f_{J_2}$ separate above $S$.
\end{lemma}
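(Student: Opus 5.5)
The idea is to enlarge $S$ so that it covers exactly one corner $(j^*, k-j^*)$ with $j^* \in J_1 \cup J_2$ and no other corner from $J_1 \cup J_2$. Since $J_1$ and $J_2$ are disjoint, $j^*$ lies in exactly one of them. At the enlarged set $S'$ that $f_{J_i}$ then falls while the other still hangs, which is separation above $S$. I assume $J_1$ and $J_2$ are nonempty. If exactly one is empty, the empty disjunction hangs everywhere, and $S' = V$ separates. Everything depends on $S$ only through $(\ell, r)$, so I work in the $(\ell, r)$ grid throughout.

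\emph{Step 1 (what hanging at $S$ gives).} Since both $f_{J_1}$ and $f_{J_2}$ hang at $S$, for every $j \in J := J_1 \cup J_2$ we have $\ell < j$ or $r < k-j$. In particular every $j \in J$ with $j \le \ell$ satisfies $r < k - j$.

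\emph{Step 2 (choice of $j^*$).} If some $j \in J$ exceeds $\ell$, let $j^*$ be the smallest such $j$. Otherwise let $j^*$ be the largest element of $J$. Then every $j' \in J$ with $j' < j^*$ satisfies $j' \le \ell$: in the first case by minimality of $j^*$, in the second because all of $J$ is at most $\ell$. By Step 1, each such $j'$ therefore has $r < k - j'$.

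\emph{Step 3 (the enlargement).} Set $\ell' = \max(\ell, j^*)$ and $r' = \max(r, k-j^*)$. Feasibility of $j^*$ gives $j^* \le n_1$ and $k - j^* \le n_2$, so we can add nails of $L \setminus S$ and $R \setminus S$ to $S$ to obtain $S' \supseteq S$ with these counts. At $S'$ the quadrant of $j^*$ is covered, so the $f_{J_i}$ containing $j^*$ falls. We then check that no other $j' \in J$ is covered, which makes the other staircase hang at $S'$:
\begin{itemize}
\item For $j' > j^*$: this occurs only in the first case, where $j^* > \ell$, so $\ell' = j^* < j'$.
\item For $j' < j^*$: Step 2 gives $r < k-j'$, and $k - j^* < k - j'$, so $r' < k - j'$.
\end{itemize}

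The only delicate point is the choice of $j^*$ in Step 2. A naive choice, such as an arbitrary element of $J_1$, can push $r$ or $\ell$ past a neighbouring corner of $J_2$. Taking the corner immediately to the right of the current $\ell$-coordinate is what makes the two one-sided checks in Step 3 go through.
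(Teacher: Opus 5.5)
Your proof is correct, and it takes a different route from the paper's.

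\textbf{The paper's route.} The paper argues by contradiction. It passes to relative coordinates $(a,b)$, the numbers of additional removals in $L$ and $R$, so that disjunct $j$ becomes the quadrant with corner $(\alpha_j,\beta_j)=(\max(0,j-\ell),\max(0,k-j-r))$. It proves that this corner map is injective on the indices hanging at $S$. If $f_{J_1}$ and $f_{J_2}$ agreed on every superset, they would share a minimal corner; injectivity then forces a common index, contradicting disjointness.

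\textbf{Your route.} You exhibit an explicit witness instead. Your $S'$ is exactly the corner $(\ell+\alpha_{j^*},\,r+\beta_{j^*})$ of a carefully chosen index. Your two one-sided checks show that no other quadrant of $J_1\cup J_2$ contains this corner: every $j'<j^*$ has $\beta_{j'}>\beta_{j^*}$, and every $j'>j^*$ has $\alpha_{j'}>\alpha_{j^*}$.

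\textbf{What each buys.} Your argument is constructive. It proves a slightly stronger statement: some superset of $S$ covers exactly one disjunct of $J_1\cup J_2$. It also makes explicit two points the paper leaves implicit. First, $S'$ actually fits inside $V$, which uses feasibility of $j^*$. Second, you treat the case of an empty index set, which the paper silently excludes via ``falls once enough further nails are removed.'' In fact your main construction needs only $J_1\cup J_2\neq\emptyset$, so your separate treatment of an empty $J_i$ is unnecessary. The paper's argument avoids your case split in choosing $j^*$. Its key fact, injectivity of the corner map on hanging indices, is a clean structural statement that does not depend on picking a particular corner.
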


\begin{proof}
Parametrize the supersets $S' \supseteq S$ by the numbers $a, b \ge 0$
of \emph{additional} nails removed in $L$ and in $R$.
Disjunct $j$ falls at $S'$ iff $\ell + a \ge j$ and $r + b \ge k - j$,
that is, iff $a \ge \alpha_j$ and $b \ge \beta_j$,
where $\alpha_j = \max(0,\, j - \ell)$ and $\beta_j = \max(0,\, k - j - r)$.
So, over these supersets, $f_J$ is the union of the quadrants
with corners $(\alpha_j, \beta_j)$, $j \in J$,
and its minimal corners are among these.

The corner map $j \mapsto (\alpha_j, \beta_j)$ is injective (one-to-one)
on the indices hanging at $S$.
Indeed $\alpha_j$ is nondecreasing and $\beta_j$ nonincreasing in $j$;
if $\alpha_j = \alpha_{j'}$ with $j \ne j'$, both are $0$, so $j, j' \le \ell$;
if moreover $\beta_j = \beta_{j'}$, both are $0$, so $j, j' \ge k - r$
--- but then $\alpha_j = \beta_j = 0$, i.e.\ disjunct $j$ falls at $S$,
which is excluded.

Each $f_{J_i}$ hangs at $S$ (the corner $(0,0)$ is absent)
but falls once enough further nails are removed,
so its quadrant union has at least one minimal corner.
Were $f_{J_1}$ and $f_{J_2}$ to agree on all $S' \supseteq S$,
the two staircases would coincide and share their minimal corners.
But every index in $J_1 \cup J_2$ hangs at $S$ (else its $f_{J_i}$ would fall),
and distinct indices hanging at $S$ have distinct corners,
so a shared corner would come from one index lying in both $J_1$ and $J_2$,
contradicting $J_1 \cap J_2 = \emptyset$.
Hence $f_{J_1}$ and $f_{J_2}$ separate above $S$.
\end{proof}

\begin{theorem}
\label{thm:correctness}
For every $k$ with $1 \le k \le n$ and every split,
the construction of Section~\ref{sec:general} produces a valid \kn{k}{n} solution.
\end{theorem}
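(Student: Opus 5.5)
We argue by strong induction on $n$, and within fixed $n$ on $k$, that the construction of Section~\ref{sec:general} solves $\kof{k}{V}$. The base cases are the easy cases of Section~\ref{sec:notation}. Whenever the recursion asks for $\Hang{0}{\cdot}$ the condition is vacuous and the term is dropped, which matches the $j=0$ and $j=k$ cases of $D_j$. Whenever it reaches $k=n$, $k=n-1$ or $k=1$ we may use Propositions~\ref{prop:k-equals-n}, \ref{prop:k-equals-n-1} and~\ref{prop:k-equals-1}. For $n \ge 2$ with any split $L \cup R$ into nonempty parts, every recursive call $\Hang{j}{L}$ or $\Hang{(k-j)}{R}$ is on fewer than $n$ nails, with $1 \le j \le n_1$ and $1 \le k-j \le n_2$ respectively, since only feasible $j$ occur. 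So the induction hypothesis supplies valid solutions of these subpuzzles.

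The proof then has three steps.

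\textbf{Leaves.} For each feasible $j$, the leaf $D_j$ solves the cross term $\kof{j}{L} \wedge \kof{(k-j)}{R}$. This is Lemma~\ref{lem:disjoint} applied to the disjoint halves $L$ and $R$. At the ends $j=0$ or $j=k$, where one conjunct is vacuous, it follows directly from the induction hypothesis.

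\textbf{Logical identity.} The disjunction of these cross terms over the feasible $j$ equals $\kof{k}{V}$, which is identity~\eqref{eq:split}. If $|S| = \ell + r \ge k$, take $j = \min(\ell, k)$. Then $j$ is feasible: $j \le n_1$, and $k - j \le r \le n_2$ gives $j \ge k - n_2$. Also $\ell \ge j$ and $r \ge k - j$, so that disjunct falls. Conversely, if some disjunct falls then $\ell + r \ge j + (k-j) = k$.

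\textbf{Tree.} This is the main step: the commutator tree over the $D_j$ solves $f = \bigvee_j f_{\{j\}}$. We invoke the sufficient condition of Theorem~\ref{thm:tree}. Take an internal node whose two subtrees have leaf sets $J_1$ and $J_2$. These sets are disjoint because each feasible $j$ occurs exactly once as a leaf. By induction on the tree, the subtrees solve $f_{J_1}$ and $f_{J_2}$; this is the same theorem applied to subtrees. Now let $S$ be a set where $f$ hangs. Then every disjunct hangs at $S$, so both $f_{J_1}$ and $f_{J_2}$ hang there. Lemma~\ref{lem:frontier} then shows they separate above $S$. Hence no internal node vanishes at $S$, and $T$ solves $f = \kof{k}{V}$.

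The Huffman shape plays no role, because every tree shape is covered by this argument. The delicate point is bookkeeping rather than mathematics. Lemma~\ref{lem:frontier} needs each subtree's specification to be exactly the staircase $f_J$. It also needs hanging at $S$, which we derive from $f$ hanging there, not merely from the subtree being nonzero. Both are settled by applying Theorem~\ref{thm:tree} bottom-up, so that each subtree is known to solve its $f_J$ before its parent is considered.
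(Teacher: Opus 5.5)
Your proof is correct and follows the paper's own route. Like the paper, you induct on $n$, show each leaf solves $f_{\{j\}}$ by Lemma~\ref{lem:disjoint}, use the identity~\eqref{eq:split}, and feed Lemma~\ref{lem:frontier} into the separation condition of Theorem~\ref{thm:tree}; your bottom-up bookkeeping matches the paper's closing remark that each subtree computes its own $f_J$. The only addition is that you verify~\eqref{eq:split} explicitly with the witness $j=\min(\ell,k)$, whereas the paper takes it from the informal derivation in Section~\ref{sec:general}.
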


\begin{proof}
Induction on $n$; for $n = 1$ the single nail solves \kn{1}{1}.
For $n \ge 2$, by induction the subproblems are solved correctly,
so $D_j = \Hang{j}{L} + \Hang{(k-j)}{R}$ combines solutions on the disjoint
halves and hence solves $f_{\{j\}}$ by Lemma~\ref{lem:disjoint}
(the boundary cases $D_0 = \Hang{k}{R}$ and $D_k = \Hang{k}{L}$ likewise).
The tree thus has leaves solving the $f_{\{j\}}$,
with target $\bigvee_j f_{\{j\}} = \kof{k}{V}$ by~\eqref{eq:split}.
Wherever $\kof{k}{V}$ hangs we have $\ell + r < k$,
so no disjunct falls (each needs $\ell + r \ge k$)
and every staircase $f_J$ hangs;
hence at each internal node the two child staircases separate above $S$
(Lemma~\ref{lem:frontier}),
and Theorem~\ref{thm:tree} gives that the tree solves $\kof{k}{V}$.
Separation in fact holds wherever both children hang,
so each subtree even computes its own $f_J$.
\end{proof}

\begin{remark}[Demaine's \kn{2}{4} solution]
\label{rem:demaine}
Demaine et al.~\cite{Demaine2014} give the \kn{2}{4} solution
\[
  \Comm{\Comm{1+2}{\Comm{1+3}{1+4}}}{\Comm{2+3}{\Comm{2+4}{3+4}}},
\]
a commutator tree whose six leaves are the sums $i + j$, one per pair of nails,
each solving the conjunction that nails $i$ and $j$ are both removed
(Figure~\ref{fig:demaine-tree}).
That it falls when any two nails are removed is immediate:
the matching leaf vanishes and zeros the tree.
That it hangs under one nail or none follows from Theorem~\ref{thm:tree}:
the disjunction of the six leaves' minterms is $\kof{2}{V}$,
and the theorem reduces correctness,
by the essential-removals reduction (Section~\ref{sec:notation}),
to checking that no internal node vanishes at any of the four single-nail
removals --- a small manual verification.
The $\binom{4}{2} = 6$ two-nail removals need no separate check
since each removes the matching leaf and so zeros the tree.
The same reduction applies to any commutator tree built from the six pair-sums,
and more generally to \kn{2}{n} for any $n$ by taking the $\binom{n}{2}$
pair-sums as leaves, leaving the $n$ single-nail removals to check.

The subtree $\Comm{1+3}{1+4}$ collapses at the removal $\{3,4\}$:
both arguments reduce to nail~$1$ and so commute,
so the subtree solves $\kof{2}{\{1,3,4\}}$, not just the disjunction of its two leaves.
This is a harmless ``don't-care''
since $\{3,4\}$ is a removal at which $\kof{2}{V}$ already falls;
it also makes the explicit leaf $3+4$ redundant,
opening the way to a shorter solution (Section~\ref{sec:concrete-2-of-4}).
\end{remark}

\begin{figure}[ht]
  \centering
  \includegraphics[width=0.85\linewidth]{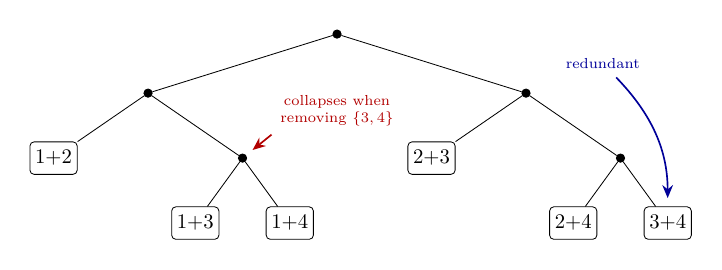}
  \caption{Demaine's commutator tree for the \kn{2}{4} puzzle.
    Each leaf $i + j$ is the pair conjunction
    ``nails $i$ and $j$ are both removed.''
    The inner node above $1{+}3$ and $1{+}4$ collapses when nails $3$ and
    $4$ are removed: both arguments then reduce to nail $1$,
    which commutes with itself.
    This collapse is harmless because $\{3,4\}$ is a removal at which
    $\kof{2}{V}$ already falls;
    it also makes the explicit leaf $3+4$ redundant,
    since its minterm $\{3,4\}$ is supplied by the collapse.}
  \label{fig:demaine-tree}
\end{figure}

\subsection{Recurrence and closed form}

Let $\Len{2}(n)$ denote the length of the construction's word for \kn{2}{n} on $n = 2^i$ nails,
counted before any free reduction at the commutator junctions, with $m = n/2$ as above.
Because reduction can only shorten the word,
$\Len{2}(n)$ is an upper bound on the length of the solution actually produced
(see the remark after Theorem~\ref{thm:2-of-n}).
The three disjunction subexpressions are:
\begin{itemize}
\item ``both removed in $L$''
  --- a \kn{2}{m} subproblem on $L$, of length $\Len{2}(m)$;
\item ``both removed in $R$''
  --- a \kn{2}{m} subproblem on $R$, of length $\Len{2}(m)$;
\item the cross term ``one in each''
  --- the sum of a \kn{1}{m} solution on $L$ and a \kn{1}{m} solution on $R$,
  of length $2\,\Len{1}(m) = 2m^2 = \tfrac{1}{2}n^2$ by Proposition~\ref{prop:k-equals-1}.
\end{itemize}

These three subexpressions are the leaves of a commutator tree.
Recall from Section~\ref{sec:general} that a leaf at depth $d$ contributes $2^d$ times its length.
With three leaves there is one leaf at depth $1$ and two at depth $2$,
so the total length is $4W - 2\ell$,
where $W$ is the sum of the three leaf lengths and $\ell$ is the length of the leaf at the root.
The length is minimized by placing the \emph{longest} leaf at the root --- the Huffman rule ---
equivalently merging the two shortest leaves into the depth-$2$ commutator.

Which leaf is longest depends on $n$.
At $n = 4$ (so $m = 2$) the cross term, of length $2m^2 = 8$,
is longer than each same-half subproblem ($\Len{2}(2) = 2$),
so it sits at the root and gives the base value
\[
  \Len{2}(4) \;=\; 2 \cdot 8 + 4 \cdot 2 + 4 \cdot 2 \;=\; 32.
\]
For every $n \ge 8$ the cross term is instead one of the two \emph{shortest} leaves,
so the optimal tree merges it with one same-half subproblem at depth $2$ and
places the other same-half subproblem at the root:
\[
  \Len{2}(n) \;=\; \underbrace{2\,\Len{2}(m)}_{\text{root}} \;+\; \underbrace{4\,\Len{2}(m) + 4 \cdot 2m^2}_{\text{depth }2} \;=\; 6\,\Len{2}(m) + 8m^2 \;=\; 6\,\Len{2}(n/2) + 2n^2.
\]
That the cross term really is among the two shortest leaves for $n \ge 8$
amounts to $\Len{2}(m) \ge 2m^2$ for $m \ge 4$, which follows by induction:
it holds with equality at $m = 4$ ($\Len{2}(4) = 32 = 2 \cdot 4^2$),
and the recurrence propagates it,
since $\Len{2}(2m) = 6\,\Len{2}(m) + 8m^2 \ge 8m^2 = 2(2m)^2$.

With base $\Len{2}(4) = 32$ and this recurrence for $n \ge 8$,
we obtain the following.

\begin{theorem}
\label{thm:2-of-n}
For $n = 2^i$ with $i \ge 2$,
the Demaine-perspective binary-splitting construction with Huffman commutator trees
produces a solution to the \kn{2}{n} picture-hanging puzzle (Demaine convention) of length at most
\begin{equation}
\label{eq:L2}
  \Len{2}(n) \;=\; \tfrac{8}{3}\,n^{\log_2 6} - 4\,n^2.
\end{equation}
In particular, its length is $O(n^{\log_2 6})$ with $\log_2 6 \approx 2.585$.
\end{theorem}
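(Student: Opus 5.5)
The plan splits the theorem into two claims: the construction yields a valid \kn{2}{n} solution, and its length (counted before free reduction) satisfies the closed form~\eqref{eq:L2}.

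\textbf{Correctness.} This is Theorem~\ref{thm:correctness} specialized to $k=2$ with balanced splits. I would check that the leaves used are legitimate solutions of the subproblems:
\begin{itemize}
\item the same-half leaves are, by induction, \kn{2}{m} solutions produced by the same construction;
\item the cross term is a sum of two \kn{1}{m} solutions from Proposition~\ref{prop:k-equals-1} on the disjoint halves.
\end{itemize}
The construction allows any solution at each leaf, so these choices are admissible. The three-leaf shape $\Comm{X}{\Comm{Y}{Z}}$ is one particular commutator tree, and Theorem~\ref{thm:correctness} covers every tree shape. Free reduction at junctions does not change the group element. It therefore preserves correctness and can only shorten the word, which justifies the phrase ``at most.''

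\textbf{Length.} The recurrence is already derived in the text:
\[
\Len{2}(4)=32, \qquad \Len{2}(n)=6\,\Len{2}(n/2)+2n^2 \quad (n\ge 8).
\]
The text also gives the Huffman ordering claim, that $\Len{2}(m)\ge 2m^2$ for $m\ge 4$. Strictly, that claim is needed only to say the recurrence describes the Huffman-optimal tree; the length of the stated construction is the recurrence either way.

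I would then verify the closed form by induction on $i$.
\begin{itemize}
\item \emph{Base case.} For $i=2$, $n=4$: $\tfrac{8}{3}\cdot 36-4\cdot 16=96-64=32$.
\item \emph{Inductive step.} Suppose $\Len{2}(2^{i-1})=\tfrac{8}{3}6^{i-1}-4\cdot 4^{i-1}$. Then
\[
6\Big(\tfrac{8}{3}6^{i-1}-4\cdot 4^{i-1}\Big)+2\cdot 4^i=\tfrac{8}{3}6^i-6\cdot 4^i+2\cdot 4^i=\tfrac{8}{3}6^i-4\cdot 4^i.
\]
\end{itemize}
Equivalently, one can solve the linear recurrence directly. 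Write $\Len{2}(2^i)=A\,6^i+B\,4^i$. Substituting, the particular part gives $B\cdot 4^i=\tfrac{6}{4}B\cdot 4^i+2\cdot 4^i$, so $B=-4$. The base case then gives $A\cdot 36=32+64$, so $A=8/3$. Rewriting $6^i=n^{\log_2 6}$ and $4^i=n^2$ gives~\eqref{eq:L2}. The $O(n^{\log_2 6})$ statement follows immediately.

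\textbf{Main obstacle.} The only delicate point is the relation between the counted length and the actual solution. The closed form counts letters before reduction, so the statement reads ``at most.'' Beyond that, I need to confirm that the depth-based accounting $4W-2\ell$ is exact for unreduced words, which holds because $\Comm{A}{B}$ concatenates $A,B,-A,-B$. Everything else is routine algebra.
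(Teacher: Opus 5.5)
Your proposal is correct and follows the paper's proof. It uses the same recurrence $\Len{2}(n)=6\,\Len{2}(n/2)+2n^2$ with base $\Len{2}(4)=32$, solved by a homogeneous-plus-particular ansatz (your induction is an equivalent check), with validity from Theorem~\ref{thm:correctness} and ``at most'' from free reduction.

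One small imprecision: without the ordering fact $\Len{2}(m)\ge 2m^2$, the Huffman tree's length would not equal the recurrence, only be bounded by it (by Huffman optimality and monotonicity in $\Len{2}(m)$), which still suffices for the stated upper bound.
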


\begin{proof}
The recurrence $\Len{2}(n) = 6\,\Len{2}(n/2) + 2n^2$ with $\Len{2}(4) = 32$
is solved by elementary means.
Setting $\ell_i = \Len{2}(2^i)$, the recurrence becomes
\[
  \ell_i \;=\; 6\,\ell_{i-1} + 2 \cdot 4^i.
\]
The homogeneous solution is $A \cdot 6^i$.
For a particular solution, try $\ell_i = B \cdot 4^i$:
\[
  B \cdot 4^i \;=\; 6 B \cdot 4^{i-1} + 2 \cdot 4^i \;=\; \tfrac{3B}{2}\cdot 4^i + 2 \cdot 4^i,
\]
giving $B - \tfrac{3B}{2} = 2$, hence $B = -4$.
The general solution is $\ell_i = A \cdot 6^i - 4 \cdot 4^i$.
The base $\ell_2 = 32$ gives $32 = 36A - 64$,
hence $A = \tfrac{96}{36} = \tfrac{8}{3}$.
Substituting $n = 2^i$ converts $6^i = n^{\log_2 6}$ and $4^i = n^2$,
yielding the stated closed form for $\Len{2}(n)$, the symbol count before reduction.
The constructed solution is therefore of length at most this value.
\end{proof}

\begin{remark}
The closed form has been verified by direct computation~\cite{Verhoeff2026Repo};
for instance, $\Len{2}(1024) = \tfrac{8}{3}\cdot 6^{10} - 4 \cdot 4^{10} = 157{,}048{,}832$.
\end{remark}

\begin{remark}[Cancellations]
Because $\Len{2}(n)$ counts symbols before free reduction,
it is only an upper bound on the length of the solution produced.
The three disjuncts have overlapping supports
--- the cross term shares variables with both same-half subproblems ---
so free reductions can occur at the commutator junctions and shorten the word
(at $n = 4$, for instance, a naive ordering already yields length $28$ rather than $32$).
Reordering the disjuncts in the commutator tree,
or replacing a subproblem solution by an equivalent one
under the symmetries of Section~\ref{sec:search},
opens further cancellations;
the gains affect the constants but not the exponent $\log_2 6$.
By contrast, Wästlund's binary construction (Section~\ref{sec:n-2-of-n})
keeps disjoint supports throughout,
so no free reduction ever occurs and its length is exact.
\end{remark}

\begin{remark}[Comparison]
Wästlund's deterministic divide-and-conquer
gives quasi-polynomial length $n^{O(\log n)}$ for the general \kn{k}{n} puzzle~\cite{Waestlund2021};
the implicit exponent for $k=2$ via that construction is not closed-form.
Theorem~\ref{thm:2-of-n} gives an explicit closed-form bound of degree $\log_2 6$,
with all constants known exactly.
\end{remark}

\section{Polynomial construction for \kn{(n-2)}{n}}
\label{sec:n-2-of-n}

The cases $k = n$ (length exactly $n$) and
$k = n-1$ (length exactly $2n$, Proposition~\ref{prop:k-equals-n-1}) are trivial.
The first non-trivial case in the co-rank family is $k = n-2$.
Switching to Wästlund's convention, where this is the \kn{3}{n} puzzle,
lets us apply Wästlund's binary-split recursion with constant per-level branching
--- the regime where balanced splitting is most efficient.

We use Wästlund's recursion, with optimal building blocks for the smaller cases:
the \kn{1}{m} subproblems (Wästlund's convention, equivalently \kn{m}{m} in Demaine's convention)
use the length-$m$ solution of Proposition~\ref{prop:k-equals-n},
and the \kn{2}{m} subproblems (Wästlund's convention, equivalently \kn{(m-1)}{m} in Demaine's convention)
use the length-$2m$ solution of Proposition~\ref{prop:k-equals-n-1}.
Both building blocks are linear in $m$; Wästlund's general construction is significantly worse in these cases.

\begin{theorem}
\label{thm:n-2-of-n}
For $n = 2^i$ with $i \ge 1$, Wästlund's binary-split construction
--- applied to \kn{3}{n} in Wästlund's convention,
equivalently \kn{(n-2)}{n} in Demaine's convention,
using optimal $k=1$ and $k=2$ subproblem solutions ---
produces a solution of length exactly
\begin{equation}
\label{eq:Ln2}
  \Len{n-2}(n) \;=\; 6n\log_2(n/2) \;=\; 6n(\log_2 n - 1).
\end{equation}
In particular, $\Len{n-2}(n) = O(n\log n)$.
\end{theorem}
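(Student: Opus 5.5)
The plan is to make Wästlund's binary split explicit for $k=3$ in his convention, read off a length recurrence, solve it in closed form, and then argue that no free reduction occurs, so that the count is exact rather than an upper bound. Correctness of the split itself I will take from Wästlund~\cite{Waestlund2021} rather than reprove.

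\emph{The split.} In Wästlund's convention the \kn{k}{n} picture hangs iff at least $k$ nails remain. Split $V = L \cup R$ with $|L| = |R| = m = n/2$. The picture hangs iff for some $j$ at least $j$ nails remain in $L$ and at least $k-j$ remain in $R$. Dually, it falls iff for every $j$ either $L$ falls for threshold $j$ or $R$ falls for threshold $k-j$. A conjunction of falls is realized by a sum, and a disjunction of falls by a commutator. This gives
\[
  W_3(V) \;=\; W_3(R) + \Comm{W_1(L)}{W_2(R)} + \Comm{W_2(L)}{W_1(R)} + W_3(L).
\]
Here the $j=0$ and $j=3$ terms drop the vacuous threshold $0$. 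The building blocks are:
\begin{itemize}
\item $W_1$ on $m$ nails is $1+\cdots+m$ (Proposition~\ref{prop:k-equals-n}), of length $m$;
\item $W_2$ on $m$ nails is the word of Proposition~\ref{prop:k-equals-n-1}, of length $2m$.
\end{itemize}
That this sum of overlapping commutators really solves \kn{3}{n} is Wästlund's correctness theorem for the binary split. Its hypotheses only require the subwords to solve their subproblems, so substituting the optimal building blocks is legitimate.

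\emph{Recurrence and closed form.} Each commutator of disjoint-support words of lengths $m$ and $2m$ has length $2(m+2m) = 6m$. Hence the recurrence is
\[
  \Len{n-2}(n) = 2\,\Len{n-2}(n/2) + 12m = 2\,\Len{n-2}(n/2) + 6n .
\]
For the base $n=2$ I argue that \kn{3}{2} in Wästlund's convention always falls, so it is solved by $0$, of length $0$. Consistently, the split produces only vanishing terms there: $W_2$ on a single nail always falls and is $0$, and $W_3$ on one nail is $0$. Setting $\ell_i = \Len{n-2}(2^i)$ gives $\ell_i = 2\ell_{i-1} + 6\cdot 2^i$ with $\ell_1 = 0$. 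Dividing by $2^i$ gives $\ell_i/2^i = \ell_{i-1}/2^{i-1} + 6$, hence $\ell_i = 6(i-1)2^i$, which is \eqref{eq:Ln2}.

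\emph{Exactness.} This is the step I expect to need the most care, because the Cancellations remark after Theorem~\ref{thm:2-of-n} shows how easily junctions reduce. I will prove by induction that the constructed word is freely reduced. First, the building blocks of Propositions~\ref{prop:k-equals-n} and~\ref{prop:k-equals-n-1} are visibly reduced. Second, a commutator $A+B-A-B$ of reduced words with disjoint supports is reduced, since every junction joins an $L$-letter to an $R$-letter. Third, I check the three junctions between consecutive summands in the order displayed above:
\begin{itemize}
\item $W_3(R)$ ends in an $R$-letter and $\Comm{W_1(L)}{W_2(R)}$ begins with an $L$-letter;
\item that commutator ends with an $R$-letter (from $-W_2(R)$), and $\Comm{W_2(L)}{W_1(R)}$ begins with an $L$-letter;
\item the latter ends with an $R$-letter and $W_3(L)$ begins with an $L$-letter.
\end{itemize}
The empty subwords at small sizes need a brief check so that no junction accidentally brings two same-side letters together. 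The only worry would be an empty $W_3$ making two commutators adjacent, but even then each junction is still $R$-to-$L$. So the word is freely reduced, its symbol count equals its length, and the recurrence value is exact.
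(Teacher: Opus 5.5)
Your proposal is correct and follows the paper's proof: the same recurrence $L(n)=2L(n/2)+6n$ with base $L(2)=0$ and closed form $6(i-1)2^i$, with correctness deferred to Wästlund and exactness from the absence of free reduction. Your junction-by-junction check justifies exactness more carefully than the paper's remark that ``the supports stay disjoint.'' The summands of the outer sum do share letters (e.g.\ $W_3(R)$ and $[W_1(L), W_2(R)]$), and it is the $R$-to-$L$ ordering at each junction that rules out cancellation.
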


\begin{proof}
The recurrence is $L(2^i) = 2\,L(2^{i-1}) + 2(2(2^i+2^{i-1}))$ with $L(2) = 0$,
which simplifies to $L(2^i) = 2\,L(2^{i-1}) + 6 \cdot 2^i$.
Setting $\ell_i = L(2^i)$, we have $\ell_i = 2\ell_{i-1} + 6 \cdot 2^i$.
The homogeneous solution is $A \cdot 2^i$; since $f(n) = 6n$ matches the homogeneous form,
the particular solution has the form $B \cdot i \cdot 2^i$:
\[
  B \cdot i \cdot 2^i \;=\; 2 B(i-1) \cdot 2^{i-1} + 6 \cdot 2^i \;=\; B(i-1)\cdot 2^i + 6 \cdot 2^i,
\]
so $Bi = B(i-1) + 6$, giving $B = 6$.
The general solution is $\ell_i = A \cdot 2^i + 6i \cdot 2^i$.
The base $\ell_1 = 0$ gives $0 = 2A + 12$, so $A = -6$.
Hence
\[
  \ell_i \;=\; (6i - 6)\cdot 2^i \;=\; 6(i-1)\cdot 2^i.
\]
With $n = 2^i$ this is $6 n (\log_2 n - 1) = 6n\log_2(n/2)$.
Correctness of the construction
--- that the word produced does solve the \kn{(n-2)}{n} puzzle ---
follows from Wästlund's analysis,
since the construction is his AND-of-ORs structure with disjoint-support discipline at every junction.
As the supports stay disjoint, no free reduction ever occurs, so the length above is exact.
\end{proof}

\subsection*{Summary of regimes}

Combining Theorems~\ref{thm:2-of-n} and~\ref{thm:n-2-of-n} with the trivial cases:

\begin{table}[ht]
  \centering
  \begin{tabular}{l|l|l}
    \emph{Regime} & \emph{Construction length (powers of two)} & \emph{Status} \\
    \hline
    $k = n$ & exactly $n$ & optimal \\
    $k = n-1$ & exactly $2n$ & optimal (Proposition~\ref{prop:k-equals-n-1}) \\
    $k = n-2$ & exactly $6n\log_2(n/2)$ & Theorem~\ref{thm:n-2-of-n} \\
    $k = 1$ & exactly $n^2$ & optimal (Proposition~\ref{prop:k-equals-1}) \\
    $k = 2$ & at most $\tfrac{8}{3}n^{\log_2 6} - 4n^2$ & Theorem~\ref{thm:2-of-n} \\
  \end{tabular}
  \caption{Length results for small $k$ and small $n-k$, for $n$ a power of two.}
  \label{tab:regimes}
\end{table}

The asymmetry between Demaine's small $k$ and small $n-k$ is striking:
our $k=2$ construction reaches word length $O(n^{\log_2 6})$,
while the ``dual'' $k = n-2$ needs only $\Theta(n\log n)$.
The two convention orientations exchange where the polynomial degree is small.

\section{An exponential construction by one-step extension}
\label{sec:extension}

The polynomial construction of Section~\ref{sec:binary-split} is asymptotically efficient,
but at small $n$ its constants are loose:
for $n=4$ it gives length $32$.
We now present an alternative construction that is asymptotically exponential
but produces shorter solutions for small $n$. It also illustrates a different structural principle
--- overlapping variable supports with controlled cancellation ---
and was the path by which the author first reached lengths below $24$ for \kn{2}{4},
motivating the exhaustive search of Section~\ref{sec:search}.
This approach was discovered after the Vierkant workshop while preparing for the MathFest.

\subsection{The recursion}

We construct $\Hang{k}{n}$ from $\Hang{k}{n-1}$ and $\Hang{k-1}{n-1}$,
for any $k$ with $2 \le k \le n-1$;
the case $k = 2$ gives the short \kn{2}{n} solutions we use below.

\begin{proposition}
\label{prop:extension}
Let $\Hang{k}{n-1}$ solve the \kn{k}{(n-1)} puzzle
and $\Hang{k-1}{n-1}$ solve the \kn{(k-1)}{(n-1)} puzzle,
both on the variables $\{1, \ldots, n-1\}$.
Define
\begin{equation}
\label{eq:recursion}
  \Hang{k}{n} \;=\; \Hang{k}{n-1} \,+\, n \,+\, \Hang{k-1}{n-1} \,-\, \Hang{k}{n-1} \,-\, n.
\end{equation}
Then $\Hang{k}{n}$ solves the \kn{k}{n} puzzle on $\{1, \ldots, n\}$.
\end{proposition}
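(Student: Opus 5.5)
The plan is a direct case analysis on whether nail $n$ is removed, using the essential-removals reduction of Section~\ref{sec:notation}: it suffices to show that $\Hang{k}{n}|_S = 0$ whenever $|S| = k$ and $\Hang{k}{n}|_S \ne 0$ whenever $|S| = k-1$. Condition (a), nonzeroness, then follows from monotonicity, since $k - 1 \ge 1$ and so $\emptyset$ lies below some hanging removal. Write $A = \Hang{k}{n-1}$ and $B = \Hang{k-1}{n-1}$, so that $w = A + n + B - A - n$. For a removal $S$ put $A' = A|_S$ and $B' = B|_S$. These depend only on $S \setminus \{n\}$, because $A$ and $B$ do not mention $n$. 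Restriction acts term by term, as in the proof of Lemma~\ref{lem:disjoint}.

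\emph{Case $n \in S$.} Then $w|_S = A' + B' - A'$, which is a conjugate of $B'$. It therefore vanishes iff $B' = 0$. Since $B$ solves \kn{(k-1)}{(n-1)}, this happens iff $|S \setminus \{n\}| \ge k-1$, that is, iff $|S| \ge k$. This case gives exactly the required behaviour for every size of $S$.

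\emph{Case $n \notin S$.} First take $|S| = k$. Then $S$ removes at least $k$ and at least $k-1$ of the nails $1,\ldots,n-1$, so $A' = B' = 0$ and $w|_S = n - n = 0$. Next take $|S| = k-1$. Then $B' = 0$ but $A' \ne 0$, because $A$ still hangs with only $k-1$ of its nails removed. Hence $w|_S = A' + n - A' - n = \Comm{A'}{n}$. This commutator has arguments with disjoint nonempty supports: $A'$ lives on $\{1,\ldots,n-1\}$ and the second argument is the single nail $n$. By Section~\ref{sec:notation}, or by the commutator part of Lemma~\ref{lem:disjoint} applied to the solutions $A$ and $n$, it is nonzero.

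These two cases cover all essential removals, so monotonicity finishes the proof. It may also help to note the identity $w = \bigl((A+n) + B - (A+n)\bigr) + \Comm{A}{n}$, which displays the two mechanisms at once: a conjugate of $B$ that carries the case $n \in S$, and a commutator with $n$ that carries the case $n \notin S$.

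The main point of care, rather than a real obstacle, is that the supports of $A$ and $B$ overlap, so Lemma~\ref{lem:disjoint} cannot be applied to the whole word. The case split avoids this. When $n$ is removed, the two copies of $A'$ cancel only through conjugation, which never creates a spurious zero. When $n$ is kept, $B'$ is already $0$ at every essential removal, so only the disjoint pair $A'$ and $n$ remains. One must check that $B'$ really vanishes at the hang-critical sets with $n \notin S$; this is exactly why the threshold of $B$ is $k-1$.
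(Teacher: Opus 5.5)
Your proof is correct and follows essentially the paper's argument. It uses the same case split on whether nail $n$ survives at the essential removals $|S| \in \{k-1, k\}$: the conjugate $A|_S + B|_S - A|_S$ handles removals containing $n$, and the disjoint-support commutator $\Comm{A|_S}{n}$ handles the hang case with $n$ kept. Handling the $n \in S$ case for all sizes at once via conjugation, and the identity $w = \bigl((A+n) + B - (A+n)\bigr) + \Comm{A}{n}$, are tidy repackagings of the same reasoning rather than a different route.
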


\begin{proof}
Write $A = \Hang{k}{n-1}$ and $B = \Hang{k-1}{n-1}$,
so $\Hang{k}{n} = A + n + B - A - n$,
and let $\ell$ count the removed nails in $\{1, \ldots, n-1\}$.
Removal acts on each part separately, so
$\Hang{k}{n}|_S = A|_S + n|_S + B|_S - A|_S - n|_S$,
where $n|_S$ is $n$ if nail $n$ survives and $0$ if it is removed.
Since $A$ solves \kn{k}{(n-1)} and $B$ solves \kn{(k-1)}{(n-1)},
\[
  A|_S = 0 \iff \ell \ge k,
  \qquad
  B|_S = 0 \iff \ell \ge k-1 .
\]
By the essential-removals reduction (Section~\ref{sec:notation})
it suffices to check that every removal of $k$ nails makes $\Hang{k}{n}$ fall,
and every removal of $k-1$ nails leaves it hanging.

\emph{Fall, $|S| = k$.}
If nail $n$ survives then $\ell = k$, so $A|_S = B|_S = 0$ and $\Hang{k}{n}|_S = n - n = 0$.
If nail~$n$ is removed then $\ell = k-1$, so $B|_S = 0$ and $\Hang{k}{n}|_S = A|_S - A|_S = 0$.

\emph{Hang, $|S| = k-1$.}
If nail $n$ survives then $\ell = k-1$, so $A|_S \ne 0$ and $B|_S = 0$,
leaving the commutator $\Hang{k}{n}|_S = \Comm{A|_S}{n}$,
nonzero by Lemma~\ref{lem:disjoint} since $\{1, \ldots, n-1\}$ and $\{n\}$ are disjoint.
If nail $n$ is removed then $\ell = k-2$, so $B|_S \ne 0$,
leaving $\Hang{k}{n}|_S = A|_S + B|_S - A|_S$, a conjugate of $B|_S$;
a conjugate solves the same problem (Section~\ref{sec:notation}), so it is nonzero.
This last case is the only essential removal in which the non-disjoint sum $B - A$ survives;
the surrounding conjugation makes it harmless.
\end{proof}

The length of $\Hang{k}{n}$ via~\eqref{eq:recursion}
is at most $2\,|\Hang{k}{n-1}| + |\Hang{k-1}{n-1}| + 2$,
with equality when no free reduction occurs at the internal junctions.
The recursion doubles $|\Hang{k}{n-1}|$ at each step (plus an additive cost),
so the length is $\Theta(2^n)$ at worst, exponential in $n$.

\subsection{Cancellations and the small-$n$ values}

We now specialize to $k = 2$.
The construction admits considerable freedom:
$\Hang{1}{n-1}$ can be any \kn{1}{(n-1)} solution,
and the orientations of its subcommutators
(e.g.\ choosing $\Comm{a}{b}$ versus $\Comm{b}{a}$)
can be selected to align the suffix of $\Hang{1}{n-1}$ with the prefix of $-\Hang{2}{n-1}$,
producing free reductions at the junction.

\subsubsection*{The case $n=4$}

A minimal \kn{2}{3} solution is
\[
  \Hang{2}{3} \;=\; 1+2+3-1-2-3,
\]
of length $6$. Its inverse $-\Hang{2}{3} = 3+2+1-3-2-1$ starts with $3+2+1$.

The junction in~\eqref{eq:recursion} lies between $\Hang{1}{3}$ and $-\Hang{2}{3}$;
to maximize cancellation, $\Hang{1}{3}$ should end in $-1-2-3$,
matching $3+2+1$ in reverse.
The choice
\[
  \Hang{1}{3} \;=\; \Comm{\Comm{2}{1}}{3} \;=\; 2+1-2-1+3+1+2-1-2-3,
\]
of length $10$, achieves this.
Substituting into~\eqref{eq:recursion} and indicating the junction:
\[
  \Hang{2}{4} \;=\; \Hang{2}{3} + 4 + \underbrace{\Hang{1}{3}}_{\text{ends } -1-2-3} + \underbrace{(-\Hang{2}{3})}_{\text{starts } 3+2+1} - 4.
\]
At the junction three pairs cancel
()$-3+3$, then $-2+2$, then $-1+1$)
and the result has length~$18$:
\[
  \Hang{2}{4} \;=\; 1+2+3-1-2-3+4+2+1-2-1+3+1+2-3-2-1-4.
\]

\subsubsection*{Other orientation choices}

How much cancellation occurs is exactly how long a suffix of $\Hang{1}{3}$
matches $-1-2-3$:
\begin{itemize}
\item $\Hang{1}{3} = \Comm{1}{\Comm{2}{3}}$ ends in $-3-2$:
  no cancellation, length $24$
  --- equal to what Wästlund's binary-split recursion produces for \kn{2}{4}.
\item $\Hang{1}{3} = \Comm{\Comm{1}{2}}{3}$ ends in $-1-3$:
  one pair cancels, length $22$.
\item $\Hang{1}{3} = \Comm{1}{\Comm{3}{2}}$ ends in $-2-3$:
  two pairs cancel, length $20$.
\end{itemize}
A clean $24$--$22$--$20$--$18$ ladder by cancellation count.
The length-18 solution was found only after the MathFest.

\subsubsection*{Larger $n$}

The same recursion applied with $n=5$ and using the length-$18$ solution
for $\Hang{2}{4}$ together with a length-$16$ solution for $\Hang{1}{4}$
(the standard balanced commutator $\Comm{\Comm{1}{2}}{\Comm{3}{4}}$)
yields a length-$58$ solution before cancellations and
a length-$54$ solution with cancellations at the recursion's junction.
This is to be compared with Wästlund's length-$66$ for \kn{2}{5}
via his binary-split recursion, and the unknown true minimum.

The recursion remains exponential in $n$,
so it is not asymptotically competitive with the polynomial construction of Section~\ref{sec:binary-split}.
Its value is in providing concretely short explicit solutions at small~$n$,
and in suggesting
--- by the gap between its length-$18$ result for $n=4$ and the construction's $32$ ---
that exhaustive search at $n=4$ might find substantially shorter solutions still.

\subsection{Concrete \kn{2}{4} solutions}
\label{sec:concrete-2-of-4}

Combining the constructions of this section with binary splitting
(Section~\ref{sec:binary-split}) and exhaustive search
(Section~\ref{sec:search}),
we can survey \kn{2}{4} solutions found so far, by decreasing length.

\begin{center}
\begin{tabular}{r|l}
\emph{Length} & \emph{Construction} \\
\hline
$80$ & Demaine et al.~\cite{Demaine2014}, as listed (unreduced count) \\
$68$ & same expression, freely reduced \\
$58$ & Demaine's tree, leaves reoriented for more cancellations \\
$54$ & further reordering, mixed subtree shapes \\
$52$ & Demaine's tree, redundant leaf $3+4$ dropped \\
$44$ & Demaine's tree reorganized into clean 2-of-3 atoms \\
$32$ & binary splitting, good Huffman order ($=\Len{2}(4)$, unreduced) \\
$24$ & extension, no junction cancellation (= W\"astlund's binary-split) \\
$22$ & extension, $\Hang{1}{3} = \Comm{\Comm{1}{2}}{3}$ (1 cancellation) \\
$20$ & extension, $\Hang{1}{3} = \Comm{1}{\Comm{3}{2}}$ (2 cancellations) \\
$18$ & extension, $\Hang{1}{3} = \Comm{\Comm{2}{1}}{3}$ (3 cancellations) \\
$16$ & exhaustive search, optimal $w_1, w_2$ (Section~\ref{sec:search} below)
\end{tabular}
\end{center}

A few of these warrant comment.

\paragraph{Reoriented leaves of Demaine's tree.}
Lengths $58$ and $54$ are achieved by reversing the order of generators
within some leaves $i + j \mapsto j + i$,
and (for $54$) varying the within-side tree shape:
\begin{align*}
58: &\quad \Comm{\Comm{1+2}{\Comm{3+1}{4+1}}}{\Comm{3+2}{\Comm{2+4}{3+4}}}, \\
54: &\quad \Comm{\Comm{\Comm{2+1}{3+1}}{3+4}}{\Comm{4+1}{\Comm{3+2}{4+2}}}.
\end{align*}
Both still cover all six pairs and so remain valid \kn{2}{4} solutions
(Remark~\ref{rem:demaine}).
This idea was suggested to me at the Vierkant workshop by one of the assistants.

\paragraph{Huffman ordering of the binary-split disjuncts.}
The three disjuncts for the balanced split $L = \{1,2\}, R = \{3,4\}$
are $1+2$, $3+4$, and the cross term $\Comm{1}{2}+\Comm{3}{4}$,
of lengths $2$, $2$, $8$.
The good Huffman order places the long cross term at depth~$1$,
$\Comm{\Comm{1+2}{3+4}}{\Comm{1}{2}+\Comm{3}{4}}$,
contributing $2\cdot 8 + 4\cdot 2 + 4\cdot 2 = 32$.
The bad order does the reverse,
$\Comm{\Comm{1+2}{\Comm{1}{2}+\Comm{3}{4}}}{3+4}$,
contributing $2\cdot 2 + 4\cdot 8 + 4\cdot 2 = 44$
--- the same length-$44$ as Demaine's clean-atom reorganization below.

\paragraph{From $68$ to $44$ via Demaine.}
Two reductions of Demaine's tree shorten the word.
First, the leaf $3+4$ is redundant:
its minterm $\{3,4\}$ is already supplied by the don't-care collapse of
$\Comm{1+3}{1+4}$ at the removal $\{3,4\}$
(and $\{2,3\}$ similarly by $\Comm{2+4}{3+4}$ at $\{2,3\}$),
so dropping it gives length~$52$.
Second, each inner $\Comm{1+i}{1+j}$ in fact solves $\kof{2}{\{1,i,j\}}$
through its collapse,
so the outer structure rewrites as the clean-spec tree
$\Comm{\Comm{1+2}{\Hang{2}{\{1,3,4\}}}}{\Hang{2}{\{2,3,4\}}}$,
in which each \kn{2}{3} block can be any solution of its specification.
Once a subtree's specification is clean
(its parent node satisfies the separation hypothesis of
Lemma~\ref{lem:combination}),
correctness depends only on the specification, not the realizing word.
Replacing each \kn{2}{3} block above
(a commutator-tree realization of length~$8$)
by the canonical \kn{2}{3} solution $a + b + c - a - b - c$ of length~$6$
(Proposition~\ref{prop:k-equals-n-1})
preserves correctness and brings the length down to~$44$.

\section{Exhaustive search and exact minima}
\label{sec:search}

\subsection*{Parity}

In any solution to the \kn{k}{n} puzzle with $k < n$,
every variable has net exponent zero.
Setting all variables except $j$ to $0$ removes $n-1 \ge k$ of them,
so the resulting expression must vanish;
hence the net exponent of $j$ in any solution is zero.
Since each variable appears with net exponent zero,
it appears an even number of times,
so the total length is even.
Odd lengths are eliminated without search.

\subsection*{Symmetry quotienting}

The search canonicalizes each candidate word by four symmetry classes:
\begin{enumerate}
\item[(S1)] \emph{Nail relabeling} ($S_n$):
  nails are renamed so that their labels appear in order of first occurrence ($1, 2, 3, \ldots$).
\item[(S2)] \emph{Per-nail sign flip} ($(\mathbb{Z}_2)^n$):
  each nail's first occurrence is made positive.
\item[(S3)] \emph{Reversal}:
  reading $w$ backward gives an equivalent solution;
  together with the sign flips of~(S2) this is the wire inversion $w \mapsto -w$.
\item[(S4)] \emph{Cyclic rotation}:
  any cyclic rotation of $w$ gives an equivalent solution.
\end{enumerate}
A word is \emph{canonical} if it is
the lexicographically smallest $(\text{S1}+\text{S2})$-normalized form
among all its cyclic rotations and the cyclic rotations of its reversal.
Each equivalence class contains exactly one canonical representative,
substantially reducing the search space.

\subsection*{Confirmed exact minima}

The following results are established by exhaustive search and
verified by the unit test suite~\cite{Verhoeff2026Repo}.
\begin{itemize}
\item \emph{\kn{2}{3}:} minimum length exactly $6$;
  the unique canonical solution is $1+2+3-1-2-3$
  (confirming Proposition~\ref{prop:k-equals-n-1}).
\item \emph{\kn{1}{3}:} minimum length exactly $10$;
  the unique canonical solution is $1+2-1-2+3+2+1-2-1-3$.
\item \emph{\kn{1}{4}:} minimum length exactly $16$;
  the unique canonical solution is $\Comm{\Comm{1}{2}}{\Comm{3}{4}}$
  (confirming Proposition~\ref{prop:k-equals-1} at $n=4$).
\item \emph{\kn{2}{4}:} minimum length exactly $16$;
  exactly two canonical solutions (see Figure~\ref{fig:2-out-of-4-optimal} and below).
\end{itemize}
\begin{figure}[hbt]
\centering
\includegraphics[height=5cm]{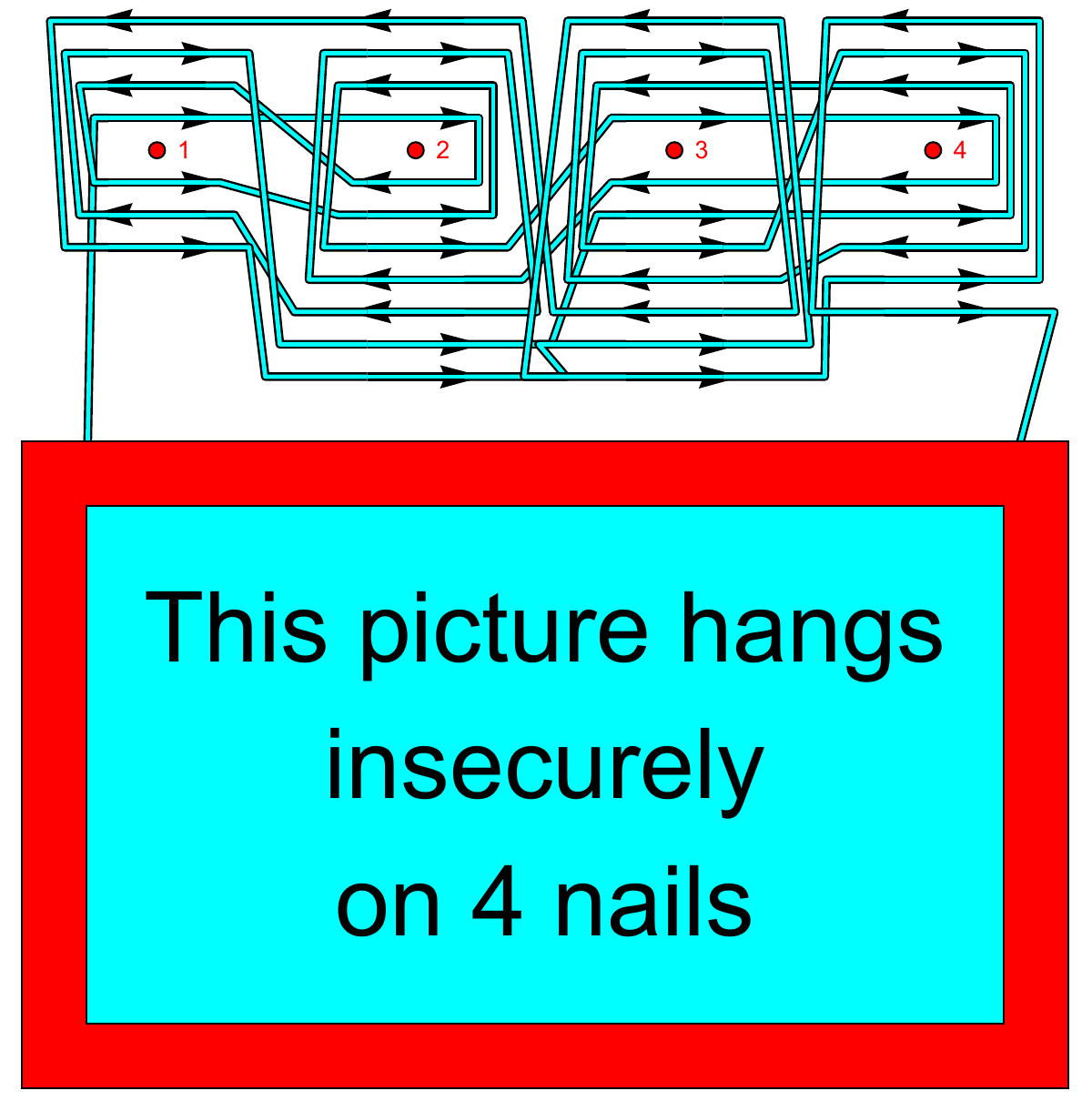}
\caption{An optimal length-16 solution for the \kn{2}{4} picture-hanging puzzle.}
\label{fig:2-out-of-4-optimal}
\end{figure}

\subsection*{Main result: exact minimum for \kn{2}{4}}

No solution to the \kn{2}{4} puzzle exists with length below $16$:
even lengths $2, 4, \ldots, 12$ are eliminated by a fast symmetry-pruned search;
length $14$ is eliminated by a longer dedicated search run (just over two hours in Python);
all odd lengths are eliminated by the parity argument.
At length $16$, the search finds exactly two canonical solutions~\cite{Verhoeff2026Repo}:
\begin{align*}
  w_1 &= 1+2-1-2+3+4+2+1-4-3+4+3-2-1-3-4, \\
  w_2 &= 1+2-1+3-2+4+2+1-4-3+4-2+3-1-3-4.
\end{align*}
Every \kn{2}{4} solution of length $16$ is equivalent to $w_1$ or $w_2$
under the four symmetries above.
The supporting code is available at~\cite{Verhoeff2026Repo},
with unit tests covering the free-group operations,
puzzle property check, and canonical-form symmetries.

For \kn{2}{4}, Theorem~\ref{thm:2-of-n} gives the bound $32$;
exhaustive search reaches the proven optimum~$16$, well below it.
The structural relationship between the asymptotic construction
and the small-$n$ optimum is an open question (Section~\ref{sec:open}).

\section{Workshop realizations: carabiners and Whitehead links}
\label{sec:carabiners}

For workshop purposes,
the constructions are realized on a wooden board fitted with carabiners rather than nails
(see Figure~\ref{fig:carabiner-board}).
A carabiner physically prevents the wire from slipping off,
which is essential when participants handle a partially completed construction:
the picture should fall only when the relevant carabiners are deliberately ``removed''
(opened, and the wire pulled through).
However, this convenience introduces an ambiguity that does not arise with vertical nails under gravity.
\begin{figure}[ht]
  \centering
  \includegraphics[trim=23cm 0 5cm 0,clip,height=5cm]{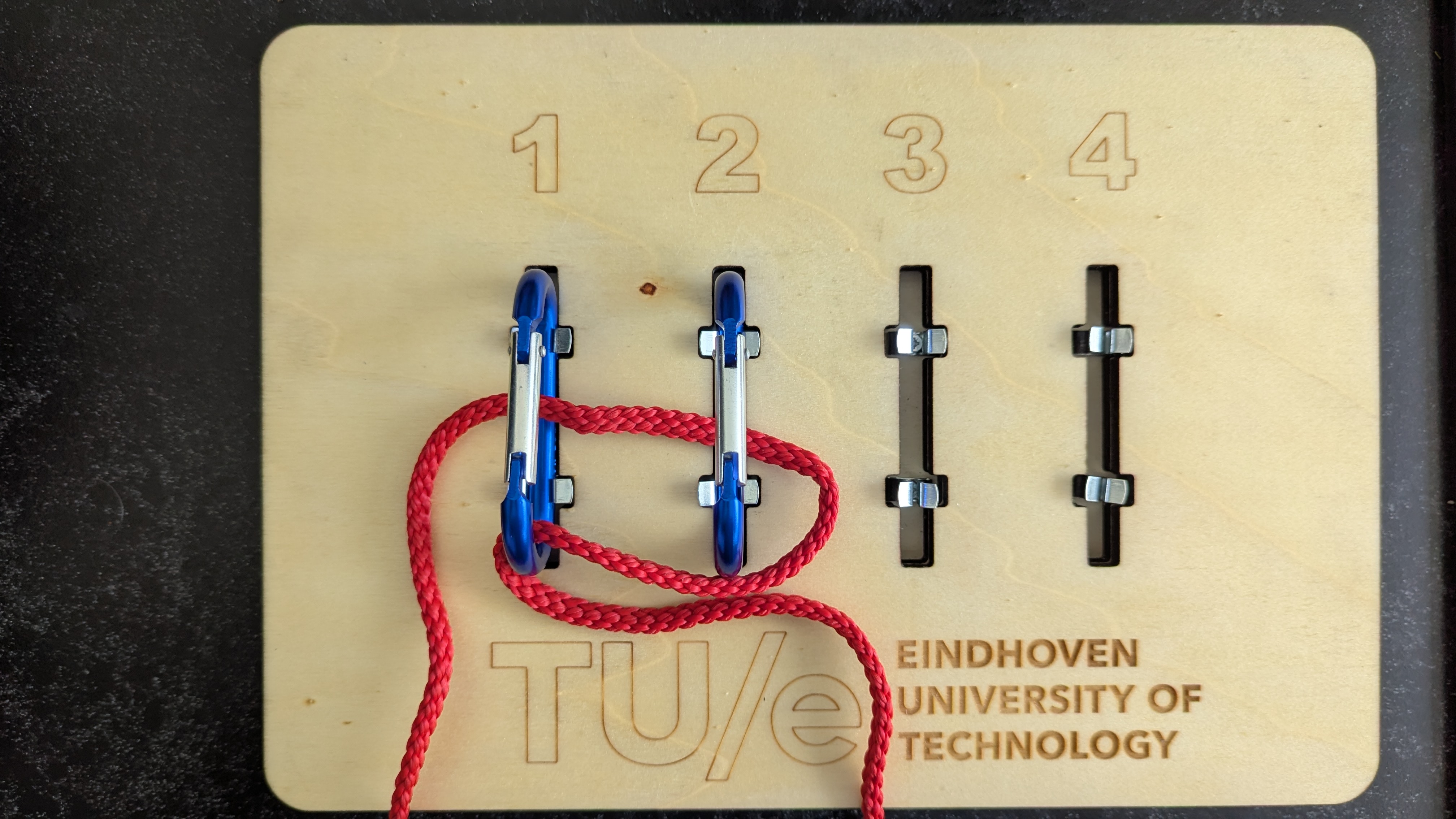}\hfil
  \caption{Hanging $1+2-1$ on board with two carabiners.}
  \label{fig:carabiner-board}
\end{figure}

With nails and gravity,
the wire's vertical hierarchy at each crossing is fixed by physics:
a later strand naturally falls in front of
(or behind, depending on how the wire is paid out) an earlier one.
With carabiners on a flat board, the wire's over/under choice at each crossing is a free parameter,
and incorrect choices produce topologically different links.

\subsection*{The simplest failure: a Whitehead link}

Consider the standard \kn{1}{2} solution $1+2-1-2$.
If the second factor $-2$ is wired with the wrong over/under choice relative to the preceding strands,
the resulting wiring no longer represents the commutator in $F_2$
but instead realizes a Whitehead link with one of the carabiners
(see Figure~\ref{fig:whitehead}).
Removing the other nail then leaves the wire genuinely linked around the remaining carabiner
rather than free to fall.

\begin{figure}[ht]
  \centering
  \includegraphics[trim=30cm 0 47cm 3cm,clip,height=5cm]{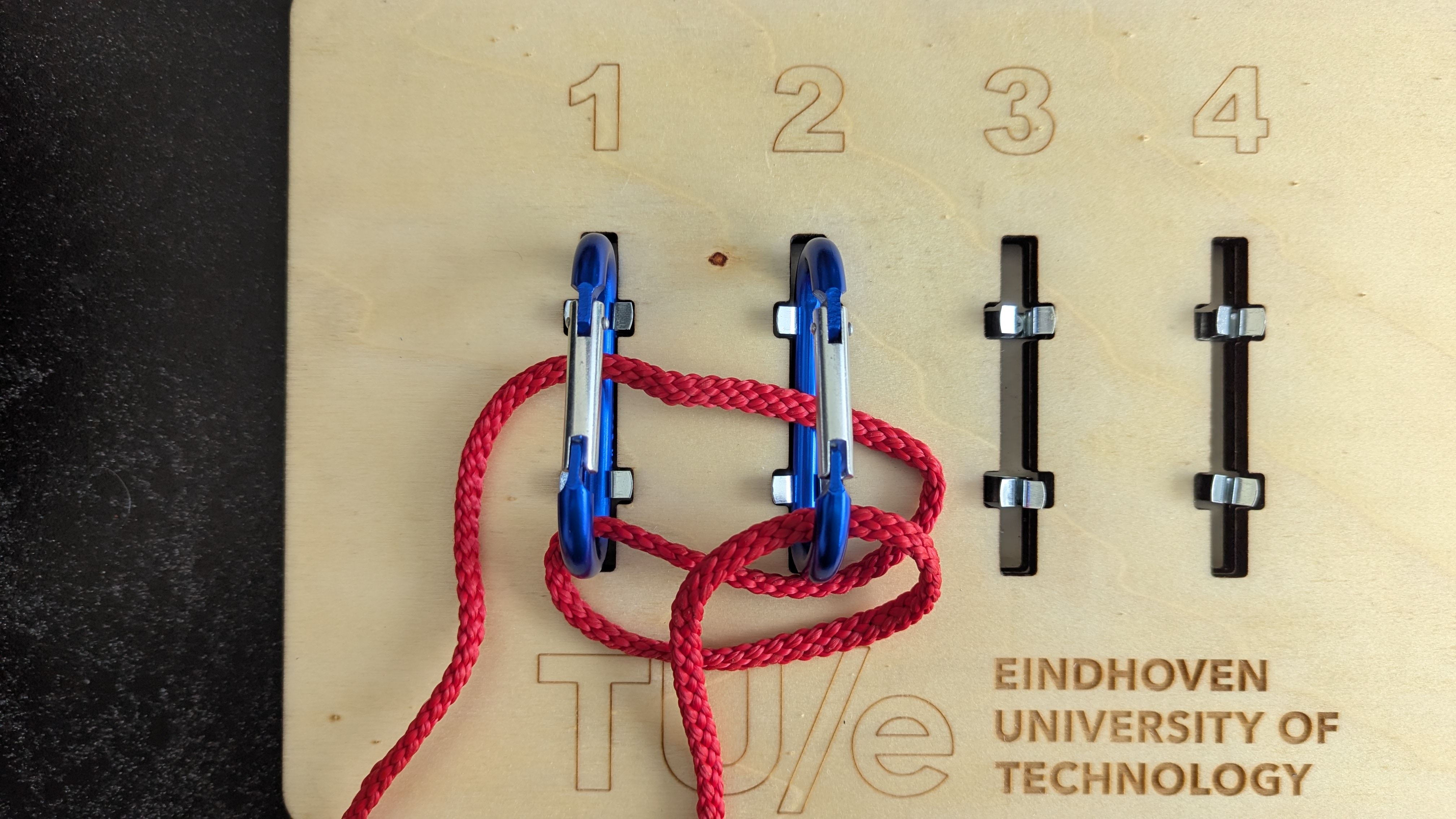}\hfil%
  \includegraphics[trim=30cm 1cm 47.5cm 2cm,clip,height=5cm]{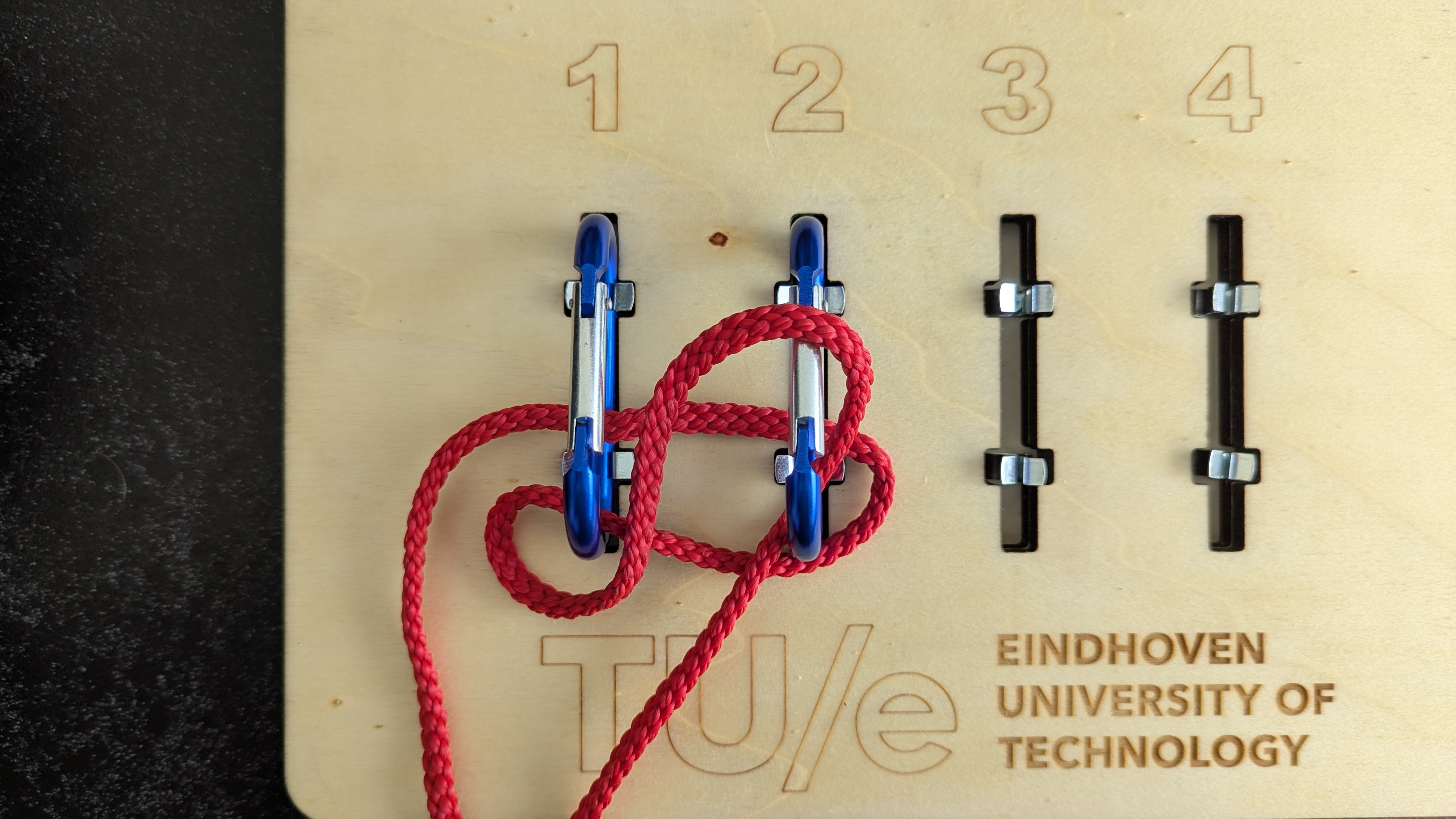}\hfil%
  \includegraphics[trim=40cm 0 42cm 2cm,clip,height=5cm]{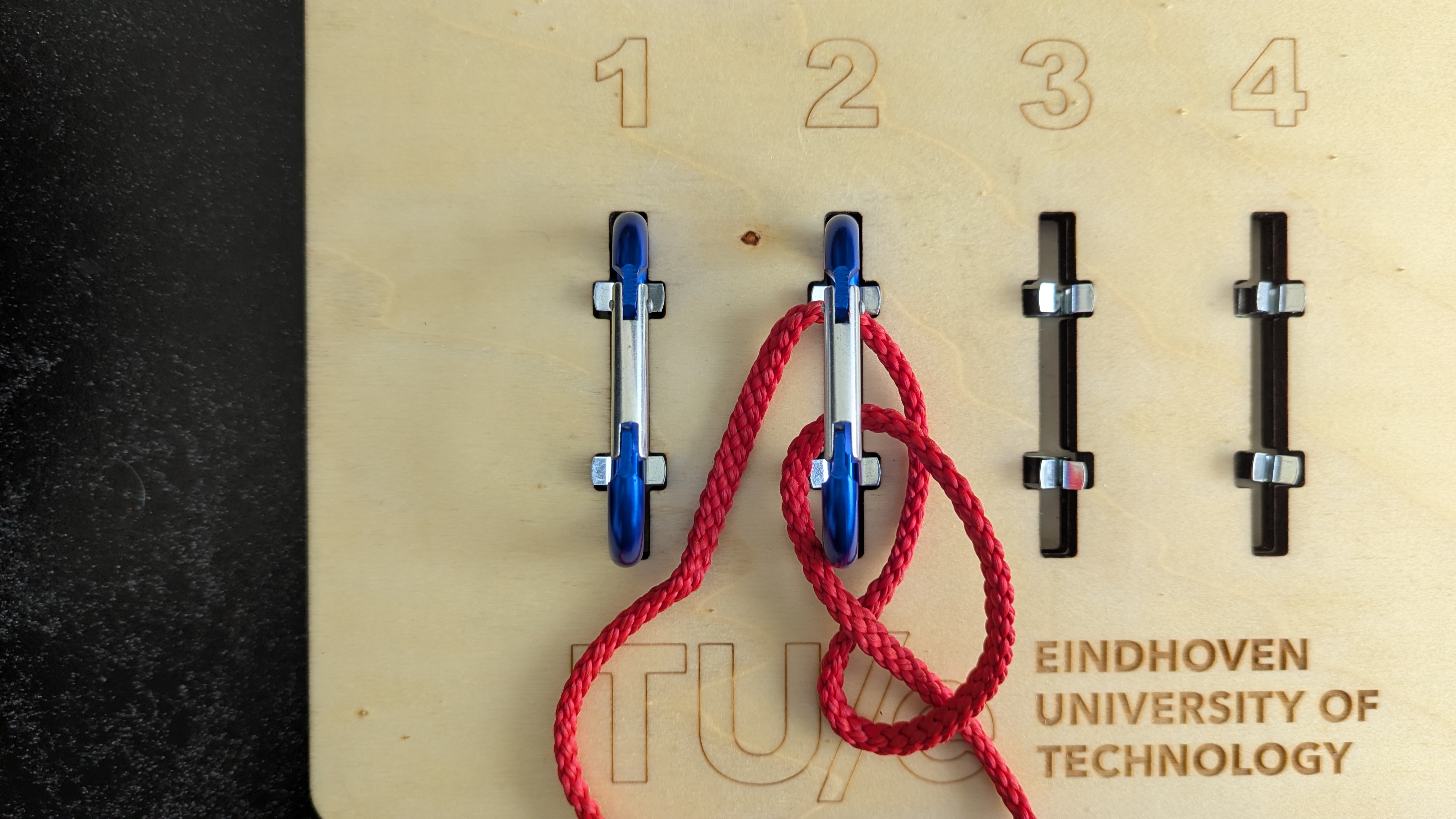}
  \caption{Left: correct hanging $1+2-1-2$,
  in which the strand for $-2$ passes consistently above the previous strands.
  Middle: incorrect over/under choice for~$-2$.
  Right: removing nail~$1$ produces a Whitehead link, which does not fall.}
  \label{fig:whitehead}
\end{figure}

This failure mode was observed during the workshop.
Beyond its pedagogical value
--- participants quickly internalize that the algebra dictates the topology
only if the geometric realization is consistent ---
it is also a reminder that the standard picture-hanging analysis
implicitly assumes a particular planar projection or a gravitational hierarchy.
In the absence of such a constraint,
the same word $w \in F_n$ can correspond to multiple non-equivalent wirings.

\section{Open questions}
\label{sec:open}

Several questions are left open.

First, what is the exact length of the general Demaine-perspective binary-splitting construction (Section~\ref{sec:binary-split})
for fixed $k \ge 3$?
(The construction is correct for every $k$ by Theorem~\ref{thm:correctness};
only its length is at issue.)
For each fixed $k$, Huffman placement yields a divide-and-conquer recurrence $\Len{k}(n) = c_k\,\Len{k}(n/2) + (\text{lower-order terms})$.
The recursive coefficient $c_k$ is governed not by the number of disjuncts ($k+1$)
but only by the two disjuncts that are themselves \kn{k}{(n/2)} subproblems,
namely the $j = 0$ and $j = k$ terms:
Huffman placement keeps these two nearest the root
while the remaining, lower-order disjuncts are pushed deeper.
Computation indicates that $c_k = 6$ for \emph{every} $k \ge 2$
(a balanced tree would instead give the larger coefficient $\approx 2(k+1)$),
so the polynomial exponent saturates at $\log_2 6$ rather than growing with $k$.
The lower-order terms, however, appear to carry polylogarithmic factors in $n$
whose power grows with $k$; pinning down this exact asymptotic is left open.

Second, what are the exact minima for the smallest open cases beyond \kn{2}{4}?
The exhaustive search techniques of Section~\ref{sec:search} reach lengths up to about $16$ in pure Python;
the next case, \kn{3}{5}
(equivalently, \kn{(n-2)}{n} at $n=5$, the first non-trivial case in the co-rank family for odd $n$),
should be within reach with a more efficient implementation.

Third, the leading constant $\tfrac{8}{3}$ in Theorem~\ref{thm:2-of-n}
and the constant $6$ in Theorem~\ref{thm:n-2-of-n} have not been given structural derivations.
They emerge from the recurrence solutions;
their cleanliness suggests they should have direct combinatorial interpretations in the construction.

Fourth, the relationship between the polynomial construction
(Theorem~\ref{thm:2-of-n}, whose bound is $\Len{2}(4) = 32$)
and the exhaustive-search optimum ($16$ for \kn{2}{4}, Section~\ref{sec:search})
is asymptotically obscure.
The optimum at $n=4$ lies well below the construction, even after cancellations;
whether this gap persists, shrinks, or grows with $n$ is unknown.
Computational evidence at $n=5,6,7,8$ would clarify this,
but pushing exhaustive search to those sizes is currently infeasible.

Fifth, the workshop-motivated question (Section~\ref{sec:carabiners}):
can the over/under choice at each crossing be incorporated into the algebraic formalism,
perhaps via braid groups,
in a way that distinguishes the intended commutator from the Whitehead link?
This is more an exposition than a research question,
but a clean treatment would benefit pedagogy.

Sixth, is there a ``puncturing'' or ``shortening'' operation
--- by analogy with coding theory --- that lowers $n$ at fixed $k$?
Removing a nail is the easy move in the wrong direction:
setting nail $n+1$ to $0$ in a \kn{k}{(n+1)} solution yields a \kn{(k-1)}{n} solution,
lowering the threshold along with the nail count.
What is missing is an operation that removes a nail while \emph{preserving} the threshold~$k$,
turning a \kn{k}{(n+1)} solution into a \kn{k}{n} one.
Naively deleting a nail's letters does not preserve the puzzle property;
a non-trivial reduction of this kind would supply a recursion
that decreases $n$ by one at fixed $k$,
complementing the halving recursion of Section~\ref{sec:binary-split}
and possibly narrowing the construction-versus-search gap above.

\section*{Acknowledgements}

The author thanks the organizers and participants
of the workshop at the summer-camp preparation day of the
Stichting Vierkant voor Wiskunde and of the mini lecture at the MathFest 2026
that led to this investigation.
In particular, Jens Heuseveldt assisted enthusiastically at both events and
suggested the first improvements on Demaine et al.'s solution to the \kn{2}{4} picture-hanging puzzle.
Discussions with Claude (Anthropic) helped clarify
the structural comparison between the various construction families and
the derivation of the closed-form recurrences.

\bibliographystyle{plain}

\end{document}

%% file: preamble.tex
\usepackage[utf8]{inputenc}
\usepackage[T1]{fontenc}
\usepackage{lmodern}
\usepackage{amsmath}
\usepackage{amssymb}

\newcommand{\Comm}[2]{[#1, #2]}             
\newcommand{\kn}[2]{$#1$-out-of-$#2$}       
\newcommand{\kof}[2]{#1\text{-of-}#2}       
\newcommand{\Hang}[2]{h_{#1,#2}}            
\newcommand{\Len}[1]{L_{#1}}                

%% file: better-picture-hanging.bbl
\begin{thebibliography}{9}

\bibitem{Demaine2014}
E.~D. Demaine, M.~L. Demaine, Y.~N. Minsky, J.~S.~B. Mitchell, R.~L. Rivest, M.~Pătrașcu.
``Picture-Hanging Puzzles.''
\emph{Theory of Computing Systems}, 54(4):531--550, 2014.
\href{https://arxiv.org/abs/1203.3602}{arXiv:1203.3602}.

\bibitem{GartsideGreenwood2007}
P.~Gartside, S.~Greenwood.
````Brunnian Links.''
\emph{Fundamenta Mathematicae}, 193:259--276, 2007.

\bibitem{Spivak1997}
A.~Spivak.
``Brainteasers B 201: Strange Painting.''
\emph{Quantum}, p.~13, May/June 1997.
Solution on p.~60, Figure~5.

\bibitem{Taylor2002}
C.~Lusby Taylor.
Contribution to ``Hanging a Picture'' Thread.
\emph{mathpuzzle.com}, circa~2002.
\url{https://www.mathpuzzle.com/hangingpicture.htm}.

\bibitem{Verhoeff2026}
T.~Verhoeff.
``Picture-Hanging Puzzle.''
\emph{Wolfram Demonstration Projects}, 2026.
\url{https://demonstrations.wolfram.com/PictureHangingPuzzles/}.

\bibitem{Verhoeff2026Repo}
T.~Verhoeff.
``'Picture-Hanging Puzzles: Search and Verification Code.''
Accessed 25~May 2026.
\url{https://gitlab.tue.nl/t-verhoeff-software/picture-hanging-puzzles/}.

\bibitem{Waestlund2021}
J.~Wästlund.
``'Faulty picture-hanging improved.''
\href{https://arxiv.org/abs/2102.00984}{arXiv:2102.00984}, 2021.

\end{thebibliography}
